\newtheorem{theorem}{Theorem}[section]
\newtheorem{lemma}[theorem]{Lemma}
\newtheorem{proposition}[theorem]{Proposition}
\newtheorem{definition}[theorem]{Definition}
\newtheorem{cnstr}{Construction}
\newcounter{remark}[section]
   \newcounter{example}[section]
\newcommand{\RN}[1]{%
  \textup{\expandafter{\romannumeral#1}}%
}
\newcommand\remove[1]{}
\newcommand{\nc}{\newcommand}
\def\mathbi#1{{\textbf{\textit #1}}}
\nc\bfa{{\boldsymbol a}}\nc\bfA{{\boldsymbol A}}\nc\cA{{\mathscr A}}\nc\sA{{\mathscr A}}
\nc\bfb{{\boldsymbol b}}\nc\bfB{{\boldsymbol B}}\nc\cB{{\mathscr B}}\nc\sB{{\mathscr B}}
\nc\bfc{{\boldsymbol c}}\nc\bfC{{\boldsymbol C}}\nc\cC{{\mathscr C}}\nc\sC{{\mathscr C}}
\nc\bfd{{\boldsymbol d}}\nc\bfD{{\boldsymbol D}}\nc\cD{{\mathscr D}}
\nc\bfe{{\boldsymbol e}}\nc\bfE{{\boldsymbol E}}\nc\cE{{\mathscr E}}
\nc\bff{{\boldsymbol f}}\nc\bfF{{\boldsymbol F}}\nc\cF{{\mathscr F}}\nc\sF{{\mathscr F}}
\nc\bfg{{\boldsymbol g}}\nc\bfG{{\boldsymbol G}}\nc\cG{{\mathscr G}}
\nc\bfh{{\boldsymbol h}}\nc\bfH{{\boldsymbol H}}\nc\cH{{\mathscr H}}
\nc\bfi{{\boldsymbol i}}\nc\bfI{{\boldsymbol I}}\nc\cI{{\mathscr I}}\nc\sI{{\mathscr I}}
\nc\bfj{{\boldsymbol j}}\nc\bfJ{{\boldsymbol J}}\nc\cJ{{\mathscr J}}
\nc\bfk{{\boldsymbol k}}\nc\bfK{{\boldsymbol K}}\nc\cK{{\mathscr K}}
\nc\bfl{{\boldsymbol l}}\nc\bfL{{\boldsymbol L}}\nc\cL{{\mathscr L}}
\nc\bfm{{\boldsymbol m}}\nc\bfM{{\boldsymbol M}}\nc\cM{{\mathscr M}}
\nc\bfn{{\boldsymbol n}}\nc\bfN{{\boldsymbol N}}\nc\cN{{\mathcal N}}
\nc\bfo{{\boldsymbol o}}\nc\bfO{{\boldsymbol O}}\nc\cO{{\mathscr O}}
\nc\bfp{{\boldsymbol p}}\nc\bfP{{\boldsymbol P}}\nc\cP{{\mathscr P}}\nc\eP{{\EuScriptP}}\nc\fP{{\mathfrak P}}
\nc\bfq{{\boldsymbol q}}\nc\bfQ{{\boldsymbol Q}}\nc\cQ{{\mathscr Q}}
\nc\bfr{{\boldsymbol r}}\nc\bfR{{\boldsymbol R}}\nc\cR{{\mathscr R}}\nc\sR{{\mathscr R}}
\nc\bfs{{\boldsymbol s}}\nc\bfS{{\boldsymbol S}}\nc\cS{{\mathscr S}}
\nc\bft{{\boldsymbol t}}\nc\bfT{{\boldsymbol T}}\nc\cT{{\mathscr T}}
\nc\bfu{{\boldsymbol u}}\nc\bfU{{\boldsymbol U}}\nc\cU{{\mathscr U}}
\nc\bfv{{\boldsymbol v}}\nc\bfV{{\boldsymbol V}}\nc\cV{{\mathscr V}}\nc\sV{{\mathscr V}}
\nc\bfw{{\boldsymbol w}}\nc\bfW{{\boldsymbol W}}\nc\cW{{\mathscr W}}\nc\sW{{\mathscr W}}
\nc\bfx{{\boldsymbol x}}\nc\bfX{{\boldsymbol X}}\nc\cX{{\mathscr X}}
\nc\bfy{{\boldsymbol y}}\nc\bfY{{\boldsymbol Y}}\nc\cY{{\mathscr Y}}
\nc\bfz{{\boldsymbol z}}\nc\bfZ{{\boldsymbol Z}}\nc\cZ{{\mathscr Z}}
\DeclareMathOperator*{\argmin}{arg\!\min}
\DeclareMathOperator{\Vol}{Vol}
\DeclareMathOperator{\Bayes}{Bayes}
\DeclareMathOperator{\Unif}{Unif}
\begin{document}

\begin{frontmatter}
\title{Optimal locally private estimation under $\ell_p$ loss for $1\le p\le 2$}
\runtitle{Optimal locally private estimation under $\ell_p$ loss for $1\le p\le 2$}

\begin{aug}
\author{\fnms{Min} \snm{Ye}\thanksref{t1}\ead[label=e1]{yeemmi@gmail.com}}

\address{Department of Electrical Engineering\\
Princeton University \\
Princeton, NJ, 08544\\
\printead{e1}}

\author{\fnms{Alexander} \snm{Barg}\thanksref{t1}\ead[label=e2]{abarg@umd.edu}}

\address{Department of Electrical and Computer Engineering\\
and Institute for Systems Research\\
 University of Maryland \\
College Park, MD 20742\\
\printead{e2}}

\thankstext{t1}{Research partially supported by NSF grants CCF1422955 and CCF1618603.}
\runauthor{M. Ye and A. Barg}

\affiliation{Princeton University and University of Maryland, College Park}

\end{aug}

\begin{abstract}
We consider the minimax estimation problem of a discrete distribution with support size $k$ under locally differential privacy constraints. A privatization scheme is applied to each raw sample independently, and we need to estimate the distribution of the raw samples from the privatized samples. A positive number $\epsilon$ measures the privacy level of a privatization scheme. 

In our previous work ({\em IEEE Trans. Inform. Theory}, 2018), we proposed a family of new privatization schemes and the corresponding estimator. We also proved that our scheme and estimator are order optimal in the regime $e^{\epsilon} \ll k$ under both $\ell_2^2$ (mean square) and $\ell_1$ loss. 
In this paper, we sharpen this result by showing asymptotic optimality of the proposed scheme under the $\ell_p^p$ loss for all $1\le p\le 2.$ More precisely, we show that for any $p\in[1,2]$ and any $k$ and $\epsilon,$ the ratio between the worst-case $\ell_p^p$ estimation loss of our scheme and the optimal value approaches $1$ as the number of samples tends to infinity. The lower bound on the minimax risk of private estimation that we establish as a part of the proof is valid for any loss function $\ell_p^p, p\ge 1.$
\end{abstract}

\begin{keyword}[class=AMS]
\kwd[Primary ]{62G05}
\end{keyword}

\begin{keyword}
\kwd{minimax estimation}
\kwd{local differential privacy}
\end{keyword}

\end{frontmatter}

\section{Introduction} 

This paper continues our work \cite{Ye17}. The context of the problem that we consider is
related to a major challenge in the statistical analysis of user data, namely, the conflict between learning accurate 
statistics and protecting sensitive information about the individuals. 
As in \cite{Ye17}, we rely on a particular formalization of user privacy called {\em differential privacy}, introduced in \cite{Dwork06, Dwork08}.
Generally speaking, differential privacy requires that the adversary not be able to reliably infer an individual's data from public statistics even with access to all the other users' data.
The concept of differential privacy has been developed in two different contexts: the {\em global privacy}
context (for instance, when institutions release statistics related to groups of people) \cite{Ghosh12}, and the {\em local privacy} context when individuals disclose their personal data \cite{Duchi13}.

In this paper, we consider the minimax estimation problem of a discrete distribution with support size $k$ under locally differential privacy.
This problem has been studied in the non-private setting \cite{Kamath15, Lehmann06}, where we can learn the distribution from the raw samples.
In the private setting, we need to estimate the distribution of raw samples from the privatized samples which are generated independently from the raw samples according to a conditional distribution  $\mathbi{Q}$ (also called a {\em privatization scheme}).
Given a privacy parameter $\epsilon>0,$
we say that $\mathbi{Q}$ is $\epsilon$-locally differentially private if the probabilities of the same output conditional on different inputs differ by a factor of at most $e^{\epsilon}.$ Clearly, smaller $\epsilon$ means that it is more difficult to infer the original data from the privatized samples, and thus leads to higher privacy.
For a given $\epsilon,$ our objective is to find the optimal $\epsilon$-private scheme that minimizes the expected estimation loss for the worst-case distribution.
In this paper, we are mainly
concerned with the scenario where we have a large number of
samples, which captures the modern trend toward ``big data" analytics.

\subsection{Existing results}\label{sec:existing} The following two privatization schemes are the most well-known in the literature: the $k$-ary Randomized Aggregatable Privacy-Preserving Ordinal Response ($k$-RAPPOR) scheme \cite{Duchi13a, Erlingsson14}, and the $k$-ary Randomized Response ($k$-RR) scheme
\cite{Warner65,Kairouz14}.
The $k$-RAPPOR scheme is order optimal in the high privacy regime where $\epsilon$ is very close to $0,$
and the $k$-RR scheme is order optimal in the low privacy regime where $e^{\epsilon} \approx k$ \cite{Kairouz16}. 
Very recently, a family of privatization schemes and the corresponding estimators were proposed independently by Wang et al. \cite{Wang16} and the present authors \cite{Ye17}. In \cite{Ye17}, we further showed that
under both $\ell_2^2$ (mean square) and $\ell_1$ loss, these privatization schemes and the corresponding estimators are order-optimal in the medium to high privacy regimes when $e^{\epsilon} \ll k.$ 
Subsequent to our work, \cite{Acharya18} proposed
another privatization scheme and proved that it is order optimal in all regimes for $\ell_1$ loss. At the same time, prior to
this paper, no schemes were shown to be asymptotically optimal in the literature.

Duchi et al.~\cite{Duchi16} gave an order-optimal lower bound on the minimax private estimation loss for the high privacy regime where $\epsilon$ is very close to $0$. In \cite{Ye17}, we proved a stronger lower bound which is order-optimal in the whole region $e^{\epsilon} \ll k$. This lower bound implies that the schemes and the estimators proposed in \cite{Wang16,Ye17} are order optimal in this regime.   
Here order-optimal means that the ratio between the true value and the lower bound is upper bounded by a constant (larger than 1) when $n$ and $k/e^{\epsilon}$ both become large enough. 

\subsection{Our contributions}
In this paper, we study the private estimation problem under the $\ell_p^p$ loss for $1\le p\le 2$, which in particular includes the widely used $\ell_1$ and $\ell_2^2$ loss. We prove an asymptotically tight lower bound on the $\ell_p^p$ loss of  the minimax private estimation for all values of $k,\epsilon$ and $1\le p\le 2$.  This improves upon the lower bounds in \cite{Ye17} and \cite{Duchi16} for the following three reasons: First, although the lower bounds in \cite{Ye17} and \cite{Duchi16} are order-optimal, they differ from the true value by a factor of several hundred. In practice, an improvement of several percentage points is already considered as a substantial advance (see for instance,~\cite{Kairouz16}), so tighter bounds are of interest. Second, the bounds in \cite{Ye17} and \cite{Duchi16} only hold for certain regions of $k$ and $\epsilon$ while the lower bound in this paper holds for all values of $k$ and $\epsilon$.
Finally, previous results were limited to $\ell_1$ and $\ell_2^2$ loss functions while the results in this paper hold for all $\ell_p^p$ loss functions, where $1\le p\le 2$.

Furthermore, as an immediate consequence of our lower bound, we show that the schemes and the estimators proposed in \cite{Wang16,Ye17} are universally optimal under the $\ell_p^p$ loss for all $1\le p\le 2$ in the sense that the ratio between the lower bound and the worst-case estimation loss of these schemes and estimators goes to $1$ when $n$ goes to infinity.

In this paper we both generalize the results, and shorten the proofs in the preprint \cite{Ye17tight} which addressed only the case of 
mean square loss.

\subsection{Related work}
While in this paper we consider only the sample complexity, 
a recent work by Acharya et al. \cite{Acharya18} took communication complexity into consideration and proposed a new privatization scheme with reduced communication complexity while maintaining the optimal order of sample complexity for the $\ell_1$ loss function.
Apart from the $\ell_p$ loss measures considered in this paper, significant attention in 
the literature was devoted to the $\ell_{\infty}$ estimation of a discrete distribution  (also called the heavy hitters problem) under local differential privacy \cite{Mishra06,Hsu12,Bassily15}. 
Although we only consider the case where the same privatization scheme is applied to each raw sample in this paper, one can also construct privatization schemes that depend on the values of previously observed privatized samples. Such interactive privatization schemes are important for online and sequential procedures in private learning \cite{Smith11,Thakurta13,Duchi16}.  A recent work \cite{Acharya18a} addresses the private estimation problem of distributional properties when the support size $k$ is not known to the estimator.
Other estimation-related problems that were
studied under local differential privacy constraints include the problem of testing identity and closeness of discrete distributions \cite{Acharya17} and hypothesis testing \cite{Gaboardi17}.

\subsection{Organization of the paper}
In Section~\ref{Sect:pre}, we formulate the problem and give a more detailed review of the existing results.
Section~\ref{Sect:ovr} is devoted to an overview of the main results of this paper.
The proofs of the main results are given in Sections~\ref{Sect:LAN}-\ref{Sect:pac}.

\section{Problem formulation and existing results}\label{Sect:pre}
\textbf{Notation:}
Let $\cX=\{1,2,\dots,k\}$ be the source alphabet and let $\mathbi{p}=(p_1,p_2,\dots,p_k)$ be a probability distribution on $\cX.$
Denote by $\Delta_k=\{\mathbi{p}\in \mathbb{R}^k: p_i\ge 0 \text{~for~} i=1,2,\dots,k, \sum_{i=1}^k p_i=1\}$ the $k$-dimensional probability simplex. Let $X$ be a random variable (RV) that takes values on $\cX$ according to $\mathbi{p}$, so that $p_i=P(X=i).$ Denote by $X^n=(X^{(1)},X^{(2)},\dots,X^{(n)})$ the vector formed of $n$ independent copies of the RV $X.$

\subsection{Problem formulation}
In the classical (non-private) distribution estimation problem, we are given direct access to i.i.d. samples
$\{X^{(i)}\}_{i=1}^n$ drawn according to some unknown distribution $\mathbi{p}\in \Delta_k.$ Our goal is to estimate $\mathbi{p}$ based on the samples \cite{Lehmann06}. We define an estimator $\hat{\mathbi{p}}$ as a function
$\hat{\mathbi{p}}:\cX^n \to \mathbb{R}^k,$ and assess its quality in terms of the worst-case risk (expected loss)
$$
\sup_{\mathbi{p}\in \Delta_k}  \underset{X^n\sim \mathbi{p}^n}{\mathbb{E}} \ell(\hat{\mathbi{p}}(X^n), \mathbi{p}),
$$
where $\ell$ is some loss function. The minimax risk is defined as the solution of the following saddlepoint problem:
$$
r_{k,n}^{\ell}:= \inf_{\hat{\mathbi{p}}} \sup_{\mathbi{p}\in \Delta_k} 
\underset{X^n\sim \mathbi{p}^n}{\mathbb{E}} \ell(\hat{\mathbi{p}}(X^n), \mathbi{p}).
$$

In the private distribution estimation problem, we can no longer access the raw samples $\{X^{(i)}\}_{i=1}^n.$ Instead, we estimate the distribution $\mathbi{p}$ from the privatized samples $\{Y^{(i)}\}_{i=1}^n,$ obtained by applying a privatization mechanism $\mathbi{Q}$ independently to each raw sample $X^{(i)}.$ A {\em privatization mechanism} (also called privatization scheme) $\mathbi{Q}:\cX\to\cY$ is simply a conditional distribution $\mathbi{Q}_{Y|X}.$ The
privatized samples $Y^{(i)}$ take values in a set $\cY$ (the ``output alphabet'') that does not have to be the same as $\cX.$

The quantities $\{Y^{(i)}\}_{i=1}^n$ are i.i.d. samples drawn according to the marginal distribution $\mathbi{m}$ given by
\begin{equation}\label{eq:defm}
\mathbi{m}(S)=\sum_{i=1}^k \mathbi{Q}(S|i)p_i
\end{equation}
 for any $S\in \sigma(\cY),$ where $\sigma(\cY)$ denotes an appropriate $\sigma$-algebra on $\cY.$
In accordance with this setting, the estimator $\hat{\mathbi{p}}$ is a measurable function $\hat{\mathbi{p}}:\cY^n\to \mathbb{R}^k.$
We assess the quality of the privatization scheme $\mathbi{Q}$ and the corresponding estimator $\hat{\mathbi{p}}$ by the worst-case risk 
$$
r_{k,n}^{\ell} (\mathbi{Q}, \hat{\mathbi{p}}) := \sup_{\mathbi{p}\in \Delta_k} 
\underset{Y^n\sim \mathbi{m}^n}{\mathbb{E}} \ell(\hat{\mathbi{p}}(Y^n), \mathbi{p}),
$$
where $\mathbi{m}^n$ is the $n$-fold product distribution and $\mathbi m$ is given by \eqref{eq:defm}.
Define the {\em minimax risk} of the privatization scheme $\mathbi{Q}$ as
  \begin{equation}\label{eq:riskQ}
r_{k,n}^{\ell} (\mathbi{Q}):= \inf_{\hat{\mathbi{p}}} r_{k,n}^{\ell} (\mathbi{Q}, \hat{\mathbi{p}}).
   \end{equation}
\begin{definition}
For a given $\epsilon>0,$
a privatization mechanism $\mathbi{Q}:\cX\to\cY$ is said to be {\em $\epsilon$-locally differentially private} if 
for all $x,x'\in\cX$
\begin{equation}\label{eq:defep}
\sup_{S\in\sigma(\cY)} \log \frac{\mathbi{Q}(Y\in S|X=x)}{\mathbi{Q}(Y\in S|X=x')} \le {\epsilon}.
\end{equation}
\end{definition}

Denote by $\cD_{\epsilon}$ the set of all $\epsilon$-locally differentially private mechanisms. Given a privacy level $\epsilon$ and a loss function $\ell$, we seek to find the optimal $\mathbi{Q}\in\cD_{\epsilon}$ with the smallest possible minimax risk $r_{k,n}^{\ell} (\mathbi{Q})$ among all the 
$\epsilon$-locally differentially private mechanisms. 
  As already mentioned,  in this paper we will consider\footnote{The standard notation for the loss function should be $\ell_p^p$, as we used in the Introduction. However, in order to avoid confusion with the notation for probability distribution, we will use $\ell_u^u$ from now on.} $\ell=\ell_u^u$ for $1\le u\le 2$, where for
$x=(x_1,x_2,\dots,x_k)\in\mathbb{R}^k$
$$
\ell_u^u (x):= \sum_{i=1}^k |x_i|^u .
$$
It is easy to see that for any valid privatization scheme $\mathbi{Q}$, the order of its $\ell_u^u$ minimax estimation risk is $\Theta(n^{-u/2})$, and $\lim_{n\to\infty} r_{k,n}^{\ell_u^u} (\mathbi{Q}) n^{u/2}$ is the coefficient of the dominant term, which measures the performance of $\mathbi{Q}$ when $n$ is large.
  
\vspace*{.1in}
\noindent{\bf Main Problem:} \emph{Suppose that the cardinality $k$ of the source alphabet is known to the estimator.
For a given privacy level $\epsilon$, we would like to find the optimal (smallest possible) value of $\lim_{n\to\infty} r_{k,n}^{\ell_u^u} (\mathbi{Q}) n^{u/2}$ among all $\mathbi{Q}\in\cD_{\epsilon}$ and to construct a privatization mechanism and a corresponding  estimator to achieve this optimal value.}
  
\vspace*{.1in}  It is this problem that we address---and resolve---in this paper. Specifically, we prove a lower bound
on $\lim_{n\to\infty} r_{k,n}^{\ell_u^u} (\mathbi{Q}) n^{u/2}$ for $\mathbi{Q}\in\cD_{\epsilon}$, which implies that the mechanism and the corresponding estimator
proposed in \cite{Ye17} are universally optimal for all loss functions $\ell_u^u, 1\le u\le 2$.

\subsection{Previous results}
In this section we briefly review known results that are relevant to our problem. In Sect.~\ref{sec:existing} we mentioned
several papers that have considered it, viz., \cite{Warner65,Duchi13a,Erlingsson14,Kairouz14,Kairouz16,Wang16,Duchi16,Acharya18}. 
In this section we focus on the results of \cite{Ye17} because they are stated in the form convenient for our presentation.

Let $\cD_{\epsilon,F}$ be the set of $\epsilon$-locally differentially private schemes with finite output alphabet. 
Let
  \begin{equation}\label{eq:DES}
\cD_{\epsilon,E}=\biggl\{ \mathbi{Q}\in\cD_{\epsilon,F}: 
\frac{\mathbi{Q}(y|x)}{\min_{x'\in\cX}\mathbi{Q}(y|x') } \in \{1,e^{\epsilon}\}
\text{~for all~} x\in\cX \text{~and all~} y\in\cY \biggr\}.
  \end{equation}
 In \cite[Theorem 13]{Ye17}, we have shown that
\begin{equation}\label{eq:red}
r_{k,n}^{\ell_u^u} (\mathbi{Q}) \ge \inf_{\mathbi{Q}'\in\cD_{\epsilon,E}} r_{k,n}^{\ell_u^u} (\mathbi{Q}')    \text{~~~for all~} \mathbi{Q}\in\cD_{\epsilon}.
\end{equation}
As a result, below we limit ourselves to schemes $\mathbi{Q}\in\cD_{\epsilon,E}$ in this paper. 
For such schemes, since the output alphabet is finite, we can write the marginal distribution $\mathbi{m}$  in \eqref{eq:defm} 
as a vector $\mathbi{m}=(\sum_{j=1}^k p_j \mathbi{Q}(y|j), y\in\cY).$ We will also use the shorthand notation
$\mathbi{m}=\mathbi{p}\mathbi{Q}$ to denote this vector.

In \cite{Ye17}, we introduced a family of privatization schemes
which are parameterized by the integer $d\in\{1,2,\dots,k-1\}.$ Given $k$ and $d,$ let the output alphabet be 
$\cY_{k,d}=\{y\in \{0,1\}^k: \sum_{i=1}^k y_i=d\},$ so $|\cY_{k,d}|=\binom{k}{d}.$ 

\begin{definition} [\cite{Ye17}] Consider the following privatization scheme:
   \begin{equation}\label{eq:defQ}
\mathbi{Q}_{k,\epsilon,d}(y|i)=\frac{e^\epsilon y_i+(1-y_i)}{\binom{k-1}{d-1}e^{\epsilon}+\binom{k-1}{d}} 
    \end{equation}
for all $y\in\cY_{k,d}$ and all $i\in\cX.$
The corresponding empirical estimator of $\mathbi{p}$ under $\mathbi{Q}_{k,\epsilon,d}$ is defined as follows: For $y^n=(y^{(1)}, y^{(2)}, \dots, y^{(n)})\in \cY_{k,d}^n$,
\begin{equation}\label{eq:emp}
\hat{p_i}(y^n)=\Big(\frac{(k-1)e^{\epsilon}+\frac{(k-1)(k-d)}{d}}{(k-d)(e^{\epsilon}-1)}\Big)
\frac{t_i(y^n)}{n}
-\frac{(d-1)e^{\epsilon}+k-d}{(k-d)(e^{\epsilon}-1)},  \quad i\in[k]
\end{equation}
where $t_i(y^n)=\sum_{j=1}^n y_i^{(j)}$ is the number of privatized samples whose $i$-th coordinate is $1$.
\end{definition}

Some papers \cite{Acharya18} call $\mathbi{Q}_{k,\epsilon,d}$ the {\em Subset Selection} mechanism.
It is easy to verify that $\mathbi{Q}_{k,\epsilon,d}$ is $\epsilon$-locally differentially private.
The worst-case estimation loss under $\mathbi{Q}_{k,\epsilon,d}$ and the empirical estimator is calculated in the following proposition.

\begin{proposition}\label{prop:risks}{\rm\cite[Prop.~4-5]{Ye17}} Let
 $\mathbi{Q}=\mathbi{Q}_{k,\epsilon,d}$ and suppose that the empirical estimator $\hat{\mathbi{p}}$ is given by \eqref{eq:emp}.
Let $\mathbi{m}=\mathbi{p}\mathbi{Q}_{k,\epsilon,d}.$
The estimation loss $\underset{Y^n\sim \mathbi{m}^n}{\mathbb{E}} \ell_2^2(\hat{\mathbi{p}}(Y^n),\mathbi{p})$ is maximized for the uniform distribution $\mathbi{p}_U=(1/k,1/k,\dots,1/k)$, and 
  \begin{equation}\label{eq:rd}
r_{k,n}^{\ell_2^2} (\mathbi{Q}_{k,\epsilon,d}, \hat{\mathbi{p}})  =
\underset{Y^n\sim \mathbi{m}_U^n}{\mathbb{E}} \ell_2^2(\hat{\mathbi{p}}(Y^n),\mathbi{p}_U)=
\frac{(k-1)^2}{nk(e^{\epsilon}-1)^2} \frac{(d e^{\epsilon} + k-d)^2 }{d(k-d)},
  \end{equation}
  where $\mathbi{m}_U=\mathbi{p}_U \mathbi{Q}_{k,\epsilon,d}.$
\end{proposition}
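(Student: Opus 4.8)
The plan is to exploit the fact that the empirical estimator \eqref{eq:emp} is an affine function of the coordinate counts $t_i(Y^n)$, each of which is a binomial variable under $\mathbi{m}^n$, so that the $\ell_2^2$ risk reduces to a sum of coordinatewise variances. First I would carry out a short counting argument over $\cY_{k,d}$ to obtain the marginal success probabilities $m_i:=P(Y_i^{(j)}=1)$. Writing $c_\epsilon=\binom{k-1}{d-1}e^{\epsilon}+\binom{k-1}{d}$ for the normalizing constant in \eqref{eq:defQ}, one finds $P(Y_i=1\mid X=i)=a$ and $P(Y_i=1\mid X=l)=b$ for $l\ne i$, where $a=\binom{k-1}{d-1}e^{\epsilon}/c_\epsilon$ and $b=\bigl(\binom{k-2}{d-2}e^{\epsilon}+\binom{k-2}{d-1}\bigr)/c_\epsilon$; hence $m_i=b+p_i(a-b)$ is affine in $p_i$. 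Pascal's identity gives $a-b=\binom{k-2}{d-1}(e^{\epsilon}-1)/c_\epsilon$, and rewriting $\binom{k-1}{d-1}=\tfrac{k-1}{k-d}\binom{k-2}{d-1}$ and $\binom{k-1}{d}=\tfrac{k-1}{d}\binom{k-2}{d-1}$ collapses this to $a-b=\dfrac{d(k-d)(e^{\epsilon}-1)}{(k-1)(de^{\epsilon}+k-d)}$; in particular $a>b$ for $\epsilon>0$, so the estimator is well defined.

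Next I would verify that \eqref{eq:emp} is exactly the unbiased estimator obtained by inverting $\mathbb{E}[t_i/n]=m_i=b+p_i(a-b)$: setting $\hat p_i=\alpha\, t_i(Y^n)/n-\beta$ with $\alpha=1/(a-b)$ and $\beta=b/(a-b)$ and substituting the formulas above reproduces both coefficients appearing in \eqref{eq:emp}. Granting this, $\mathbb{E}[\hat p_i]=p_i$, and since $t_i(Y^n)\sim\mathrm{Bin}(n,m_i)$,
$$
\underset{Y^n\sim \mathbi{m}^n}{\mathbb{E}}\,\ell_2^2(\hat{\mathbi{p}}(Y^n),\mathbi{p})=\sum_{i=1}^k\Var(\hat p_i)=\frac{\alpha^2}{n}\sum_{i=1}^k m_i(1-m_i).
$$
Here the coordinates $t_1,\dots,t_k$ are genuinely dependent (because $\sum_i Y_i^{(j)}=d$), but this is immaterial: the $\ell_2^2$ loss is a sum of coordinatewise squared errors, so only the marginal laws of the $t_i$ enter through linearity of expectation.

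It remains to maximize $\sum_i m_i(1-m_i)$ over $\mathbi{p}\in\Delta_k$. The key observation is that $\sum_i Y_i^{(j)}=d$ forces $\sum_i m_i=d$ for every $\mathbi{p}$, so $\sum_i m_i(1-m_i)=d-\sum_i m_i^2$; expanding $m_i^2=(b+p_i(a-b))^2$ and using $\sum_i p_i=1$ gives $\sum_i m_i^2=kb^2+2b(a-b)+(a-b)^2\sum_i p_i^2$. Since $(a-b)^2>0$, the risk is maximized precisely when $\sum_i p_i^2$ is smallest, and by Cauchy--Schwarz $\sum_i p_i^2\ge 1/k$ with equality only at $\mathbi{p}_U$, at which (by the permutation symmetry of $\mathbi{Q}_{k,\epsilon,d}$) $m_i=d/k$ for all $i$. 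Then $\sum_i m_i(1-m_i)=d(k-d)/k$, and substituting $\alpha^2=(a-b)^{-2}=\dfrac{(k-1)^2(de^{\epsilon}+k-d)^2}{d^2(k-d)^2(e^{\epsilon}-1)^2}$ yields exactly \eqref{eq:rd}. I do not expect a genuine obstacle; the whole argument is elementary, and the step most likely to cause friction is the binomial-coefficient bookkeeping needed to identify \eqref{eq:emp} as the unbiased inverse of $m_i=b+p_i(a-b)$ — one must match two coefficients simultaneously — together with noticing that it is the constraint $\sum_i m_i=d$ that pins the worst case to the uniform distribution.
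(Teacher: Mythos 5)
Your proof is correct and follows essentially the same route as the paper: the proposition is quoted from \cite{Ye17}, and the paper redoes the same calculation in Section~\ref{Sect:pac} for general $u$ by computing the marginal Bernoulli parameter of $t_i(Y^n)$, exploiting unbiasedness to reduce the risk to $\sum_i \Var(\hat p_i) = \frac{1}{n}\sum_i (p_i+B)(A-p_i-B)$, and pinning the maximum to $\mathbi{p}_U$ via the Cauchy--Schwarz bound $\sum_i p_i^2 \ge 1/k$. Your observation that the constraint $\sum_i Y_i^{(j)}=d$ forces $\sum_i m_i = d$ is a tidy shortcut in the algebra, but the underlying argument (binomial variance plus $\sum_i p_i^2\ge 1/k$) is the same.
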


It is clear that the smallest value of the risk $\mathbi r$ is obtained by optimizing on $d$ in \eqref{eq:rd}. Namely, given $k$ and $\epsilon$, let
\begin{equation}\label{eq:dstar}
d^\ast = d^\ast(k,\epsilon)
:= \argmin_{1\le d \le k-1}  \frac{(d e^{\epsilon} + k-d)^2 }{d(k-d)},
\end{equation}
where the ties are resolved arbitrarily. We find that $d^\ast$ takes one the following two values:
$$
d^\ast=\lceil k/(e^{\epsilon}+1) \rceil \text{ or } \lfloor k/(e^{\epsilon}+1)\rfloor.
$$
Therefore, when $k/(e^{\epsilon}+1) \le 1$, $d^\ast=1,$ and when $k/(e^{\epsilon}+1) > 1$, the value of $d^\ast$ can be determined
by simple comparison.

As a consequence of Prop.~\ref{prop:risks} we find that
  \begin{equation*}
r_{k,n}^{\ell_2^2} (\mathbi{Q}_{k,\epsilon,d^\ast}, \hat{\mathbi{p}}) = 
\min_{1\le d \le k-1} r_{k,n}^{\ell_2^2} (\mathbi{Q}_{k,\epsilon,d}, \hat{\mathbi{p}}).
  \end{equation*}   
While in \cite{Ye17} we proved the above results for the mean-square loss (and a similar claim for $\ell=\ell_1$), in this paper we show that they apply more universally. Namely, let
 \begin{equation}\label{eq:Mke}
     M(k,\epsilon):= \frac{(k-1)^2}{k^2(e^{\epsilon}-1)^2}\frac{(d^\ast e^{\epsilon} + k-d^\ast)^2 }{d^\ast(k-d^\ast)}.
  \end{equation}
and note that $r_{k,n}^{\ell_2^2} (\mathbi{Q}_{k,\epsilon,d^\ast}, \hat{\mathbi{p}})
=\frac{k}{n}M(k,\epsilon).$ In this paper we show that the quantity $M(k,\epsilon)$ bounds below the main term of the minimax risk 
for all loss functions $\ell_u^u,u\ge 1.$

\section{Main result of the paper}\label{Sect:ovr}
Our main result is that the scheme
$\mathbi{Q}_{k,\epsilon,d^\ast}$ and the empirical estimator $\hat{\mathbi{p}}$ defined by \eqref{eq:emp} are universally optimal for all loss functions $\ell_u^u, 1\le u\le 2$. Namely, the following is true.
\begin{theorem} Let $k=|\cX|,$ let $\epsilon>0,1\le u\le 2$. Then
$$
\lim_{n\to\infty} \frac{r_{k,n}^{\ell_u^u} (\mathbi{Q})} {r_{k,n}^{\ell_u^u} (\mathbi{Q}_{k,\epsilon,d^\ast}, \hat{\mathbi{p}})} \ge 1
\text{~~~for all~} \mathbi{Q}\in\cD_{\epsilon}.
$$
\end{theorem}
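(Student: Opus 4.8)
The plan is to show that, after scaling by $n^{u/2}$, both sides of the ratio converge to the same constant $c_u\,k\,M(k,\epsilon)^{u/2}$, where $M(k,\epsilon)$ is given by~\eqref{eq:Mke} and $c_u:=\mathbb E[|\xi|^u]$ for $\xi\sim\mathcal N(0,1)$. By the reduction~\eqref{eq:red} it suffices to prove the lower bound $\liminf_n n^{u/2}r_{k,n}^{\ell_u^u}(\mathbi Q)\ge c_u\,k\,M(k,\epsilon)^{u/2}$ for schemes $\mathbi Q\in\cD_{\epsilon,E}$; for such schemes the privatized samples follow a finite multinomial model with cell probabilities $\mathbi m=\mathbi p\mathbi Q$, smoothly parameterized by $\mathbi p\in\Delta_k$.

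For the lower bound I would first note that, at each interior point $\mathbi p_0$, this multinomial model is locally asymptotically normal, with Fisher information $I_{\mathbi Q}(\mathbi p_0)$ whose entries are $\sum_{y\in\cY}\mathbi Q(y|i)\mathbi Q(y|j)/(\mathbi p_0\mathbi Q)(y)$, restricted to the tangent space $T=\{v\in\mathbb R^k:\sum_i v_i=0\}$; write $\Sigma_{\mathbi Q,\mathbi p_0}$ for its inverse transported to $T$. Since the loss $\ell_u^u$ is subconvex for $u\ge1$ (its sublevel sets are symmetric convex bodies), the H\'ajek--Le Cam local asymptotic minimax theorem---or, equivalently, a van Trees/Bayesian argument with a prior supported in an $n^{-1/2}$-ball about $\mathbi p_0$---applied at each $\mathbi p_0$ and then optimized gives
\[
\liminf_{n\to\infty} n^{u/2}\, r_{k,n}^{\ell_u^u}(\mathbi Q)\ \ge\ \sup_{\mathbi p_0\in\mathrm{int}\,\Delta_k}\ \mathbb E\big[\ell_u^u(Z_{\mathbi Q,\mathbi p_0})\big]\ =\ c_u\,\sup_{\mathbi p_0\in\mathrm{int}\,\Delta_k}\ \sum_{i=1}^k\big(\Sigma_{\mathbi Q,\mathbi p_0}\big)_{ii}^{u/2},
\]
with $Z_{\mathbi Q,\mathbi p_0}\sim\mathcal N(0,\Sigma_{\mathbi Q,\mathbi p_0})$; the last equality uses that the coordinates of a centred Gaussian are one-dimensional Gaussians.

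The core of the argument is then the purely analytic inequality
\[
\sup_{\mathbi p_0\in\mathrm{int}\,\Delta_k}\ \sum_{i=1}^k\big(\Sigma_{\mathbi Q,\mathbi p_0}\big)_{ii}^{u/2}\ \ge\ k\,M(k,\epsilon)^{u/2}\qquad\text{for all }\mathbi Q\in\cD_{\epsilon,E},
\]
with equality for $\mathbi Q_{k,\epsilon,d^\ast}$, attained at $\mathbi p_0=\mathbi p_U$. For $u=2$ this is the trace bound $\sup_{\mathbi p_0}\tr\Sigma_{\mathbi Q,\mathbi p_0}\ge kM(k,\epsilon)$, i.e. the lower bound of~\cite{Ye17} (sharpened in~\cite{Ye17tight}), where the subset mechanism $\mathbi Q_{k,\epsilon,d}$, the optimal $d^\ast$ of~\eqref{eq:dstar}, and $M(k,\epsilon)$ emerge from a one-parameter minimization after a symmetrization/convexity reduction to permutation-symmetric schemes---which are exactly the $\mathbi Q_{k,\epsilon,d}$. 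For $1\le u<2$ the new feature is the concavity of $t\mapsto t^{u/2}$: concavity penalizes spreading out the diagonal entries $(\Sigma_{\mathbi Q,\mathbi p_0})_{ii}$, so a naive substitution of the $u=2$ bound at $\mathbi p_0=\mathbi p_U$ does not suffice, and I would exploit the freedom to move $\mathbi p_0$ together with the symmetrization step to force the inequality. Parameterizing $\mathbi Q\in\cD_{\epsilon,E}$ by the sets $S_y=\{x:\mathbi Q(y|x)=e^{\epsilon}\min_{x'}\mathbi Q(y|x')\}$, writing $v^\top I_{\mathbi Q}(\mathbi p_0)v$ in terms of the partial sums $v(S_y)=\sum_{i\in S_y}v_i$, and carrying out this optimization is the step I expect to be the main obstacle.

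It remains to evaluate the denominator. The empirical estimator~\eqref{eq:emp} is an affine function of the frequencies $t_i(y^n)/n$, so by the multivariate central limit theorem $\sqrt n(\hat{\mathbi p}-\mathbi p)$ is asymptotically $\mathcal N(0,\Sigma_{\mathbi p})$; at $\mathbi p=\mathbi p_U$ the computation behind Proposition~\ref{prop:risks} gives $(\Sigma_{\mathbi p_U})_{ii}=M(k,\epsilon)$ for every $i$, and an argument parallel to the one there (symmetry of $\mathbi Q_{k,\epsilon,d^\ast}$ plus concavity) shows that $\mathbi p_U$ maximizes the limiting $\ell_u^u$ loss over $\Delta_k$ for every $1\le u\le2$. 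Since each $\hat p_i$ is a bounded random variable, the sequence $\{n^{u/2}\ell_u^u(\hat{\mathbi p}(Y^n),\mathbi p)\}_n$ is uniformly integrable, so $n^{u/2} r_{k,n}^{\ell_u^u}(\mathbi Q_{k,\epsilon,d^\ast},\hat{\mathbi p})\to c_u\sum_i(\Sigma_{\mathbi p_U})_{ii}^{u/2}=c_u\,k\,M(k,\epsilon)^{u/2}$. Combining this with the lower bound of the previous two paragraphs gives $\liminf_n r_{k,n}^{\ell_u^u}(\mathbi Q)/r_{k,n}^{\ell_u^u}(\mathbi Q_{k,\epsilon,d^\ast},\hat{\mathbi p})\ge1$; this $\liminf$ is in fact a limit, since for any fixed $\mathbi Q$ an efficient estimator for the LAN model (together with the same uniform-integrability bound) attains the lower bound.
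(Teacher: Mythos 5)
Your overall architecture matches the paper's---reduce to $\cD_{\epsilon,E}$ via~\eqref{eq:red}, lower-bound the minimax risk by a Bayes risk with a prior supported on an $n^{-1/2}$-ball about an interior point, invoke local asymptotic normality, and compute the achievable constant for $\mathbi{Q}_{k,\epsilon,d^\ast}$ by the CLT. But your plan has a genuine gap at exactly the step you flag as ``the main obstacle,'' namely proving
\[
\sup_{\mathbi p_0}\ \sum_{i=1}^k\bigl(\Sigma_{\mathbi Q,\mathbi p_0}\bigr)_{ii}^{u/2}\ \ge\ k\,M(k,\epsilon)^{u/2},
\]
where $\Sigma_{\mathbi Q,\mathbi p_0}$ is the inverse of the Fisher information matrix on the tangent space. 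You offer a sketch (symmetrize, parametrize by the sets $S_y$, optimize), but no argument; and this inequality is not something one can read off from~\cite{Ye17} or~\cite{Ye17tight}, which deal with $u=2$ only. Without this step, the proposal does not establish the lower bound of Theorem~\ref{Thm:Main} and hence does not prove the theorem.

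The paper avoids this obstacle by not going through the diagonal of the inverse matrix at all. For the $i$-th component it further restricts the Bayes prior to parallel one-dimensional line segments in the direction $\mathbi v_i$ from~\eqref{eq:vv}, so the posterior of $P_i$ is governed by the \emph{scalar} Fisher information $I(p_i^\ast)=\mathbi v_i^\top I(\mathbi p^\ast)\,\mathbi v_i$, which has the explicit form $\tfrac{nk^2}{(k-1)^2}\sum_j (q_{ji}-q_j)^2/q_j$. Note that $(I(p_i^\ast))^{-1}\le\bigl(\Sigma_{\mathbi Q,\mathbi p^\ast}\bigr)_{ii}$ (fixing the nuisance directions can only help), so the paper is deliberately using a \emph{weaker} per-coordinate bound; the point is that this bound is explicit, requires no matrix inversion, and, after applying Jensen's inequality in $x\mapsto x^{-u/2}$ and the one-line Lemma~\ref{lem:ax} (which exploits the $\{1,e^\epsilon\}$ structure of $\cD_{\epsilon,E}$), it still matches the achievable constant $kC_uM(k,\epsilon)^{u/2}$. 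That choice of $\mathbi v_i$ and the resulting reduction to $k$ one-dimensional problems is the essential idea missing from your proposal; once you have it, your ``main obstacle'' collapses to elementary algebra.

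Two minor points. First, the equality $\mathbb E[\ell_u^u(Z)]=c_u\sum_i(\Sigma)_{ii}^{u/2}$ that you invoke does hold coordinatewise for a centred Gaussian, but the usage of the H\'ajek--Le Cam minimax theorem for the additive loss $\ell_u^u$ still needs a short justification (the paper avoids this by arguing coordinate by coordinate, where subconvexity of $|\cdot|^u$, $u\ge1$, gives~\eqref{eq:ji} directly). Second, your claim that the denominator limit for $\mathbi{Q}_{k,\epsilon,d^\ast}$ is attained at $\mathbi p_U$ ``by symmetry plus concavity'' is correct, and it is essentially the argument in the proof of Theorem~\ref{thm:ac}; that part of your proposal is sound and parallel to the paper.
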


This theorem is a consequence of two results which we state next.

Let $X\sim \cN(0,1)$ and define the constant 
 $$
 C_u:=E|X|^u=2^{u/2}\Gamma((u+1)/2)/\sqrt\pi  \quad \text{~for~} u>0.
$$

\begin{theorem}\label{Thm:Main}
For any $\epsilon>0,$ any $u\ge 1$, and any mechanism $\mathbi{Q}\in \cD_{\epsilon}$
\begin{equation} \label{eq:cnlb}
\lim_{n\to\infty} r_{k,n}^{\ell_u^u} (\mathbi{Q}) n^{u/2} \ge
k C_u M(k,\epsilon)^{u/2}.
\end{equation}
\end{theorem}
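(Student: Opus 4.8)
The plan is to localize around the uniform distribution $\mathbi{p}_U$, invoke the local asymptotic minimax (Hájek--Le Cam) theorem to reduce the asymptotic lower bound to a Gaussian risk, and then dispatch the resulting finite-dimensional statement by an elementary computation with the Fisher information of $\epsilon$-locally private channels together with the convexity of $t\mapsto t^{-u/2}$.

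By \eqref{eq:red} it suffices to prove the bound for $\mathbi{Q}\in\cD_{\epsilon,E}$, and after discarding any output letter $y$ with $\mathbi{m}_U(y)=0$ we may assume that the privatized marginal $\mathbi{m}=\mathbi{p}\mathbi{Q}$ has full support in a neighbourhood of $\mathbi{p}=\mathbi{p}_U$. First I would consider the $(k-1)$-dimensional local model $h\mapsto\big((\mathbi{p}_U+h/\sqrt n)\mathbi{Q}\big)^{\otimes n}$, where $h$ ranges over the tangent space $H=\{h\in\mathbb R^k:\sum_i h_i=0\}$. Since the support of $\mathbi{m}$ is finite and $\mathbi{m}(y)$ is affine in $h$, this family is differentiable in quadratic mean at $h=0$, with Fisher information the quadratic form $\mathcal I(h)=\sum_{y}\big(\sum_i h_i\mathbi{Q}(y|i)\big)^2/\mathbi{m}_U(y)$ on $H$. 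If $\mathcal I$ is singular on $H$, then $\mathbi{p}_U$ and a nearby distribution induce the same $\mathbi{m}$, so a two-point (Le Cam) argument gives $r_{k,n}^{\ell_u^u}(\mathbi{Q})\ge c>0$ for all $n$ and the claimed inequality is trivial; hence assume $\mathcal I$ is nonsingular on $H$.

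Restricting the supremum defining $r_{k,n}^{\ell_u^u}(\mathbi{Q})$ to distributions $\mathbi{p}_U+h/\sqrt n$ with $h\in H$, $\|h\|\le L$, and using the homogeneity $\ell_u^u(cx)=|c|^u\ell_u^u(x)$, the quantity $n^{u/2}r_{k,n}^{\ell_u^u}(\mathbi{Q})$ lower-bounds the local minimax risk of estimating $h$ in the above LAN family under the loss $\ell_u^u$. Since $u\ge1$, the loss $\ell_u^u$ is bowl-shaped, so the local asymptotic minimax theorem (in its truncated form, then letting the truncation level and $L$ tend to infinity) yields
$$
\liminf_{n\to\infty}n^{u/2}r_{k,n}^{\ell_u^u}(\mathbi{Q})\ \ge\ \mathbb E\,\ell_u^u(Z)\ =\ C_u\sum_{i=1}^k\sigma_i^u ,
$$
where $Z=(Z_1,\dots,Z_k)$ is the centred Gaussian vector on $H$ with covariance the generalized inverse of $\mathcal I$, and $\sigma_i^2=\Var(Z_i)=\max_{0\ne h\in H}h_i^2/\mathcal I(h)$. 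It then remains to show $\sum_{i=1}^k\sigma_i^u\ge kM(k,\epsilon)^{u/2}$ for every $\mathbi{Q}\in\cD_{\epsilon,E}$.

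For this last step I would encode a scheme in $\cD_{\epsilon,E}$ by the sets $S_y=\{i:\mathbi{Q}(y|i)=e^\epsilon\min_x\mathbi{Q}(y|x)\}$, their sizes $s_y=|S_y|$, and the weights $w_y=\mathbi{m}_U(y)$, so that $\sum_y w_y=1$. A short computation using $\sum_i h_i=0$ evaluates $\mathcal I$ on the test vector $e_i-\tfrac1k\mathbf 1\in H$ and gives $\sigma_i^2\ge\tfrac{(k-1)^2}{k^2(e^\epsilon-1)^2}A_i^{-1}$, where $A_i=\sum_y w_y\,(k\,\mathbf 1\{i\in S_y\}-s_y)^2/(s_ye^\epsilon+k-s_y)^2$; summing the numerators over $i$ gives $\sum_i A_i=k\sum_y w_y\,s_y(k-s_y)/(s_ye^\epsilon+k-s_y)^2$, which by the definition \eqref{eq:dstar} of $d^\ast$ (the maximizer of $d\mapsto d(k-d)/(de^\epsilon+k-d)^2$ over $1\le d\le k-1$, the endpoints $d=0,k$ contributing $0$) is at most $k\,d^\ast(k-d^\ast)/(d^\ast e^\epsilon+k-d^\ast)^2$. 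Convexity of $t\mapsto t^{-u/2}$ on $(0,\infty)$ then gives $\sum_i A_i^{-u/2}\ge k(\tfrac1k\sum_i A_i)^{-u/2}$, and substituting the bound on $\sum_i A_i$ and recalling \eqref{eq:Mke} yields $\sum_i\sigma_i^u\ge kM(k,\epsilon)^{u/2}$, which is exactly \eqref{eq:cnlb}; note that $u\le 2$ plays no role here, consistent with the theorem being stated for all $u\ge1$. The main obstacle is the careful justification of the localized LAM step --- in particular the truncation argument controlling the unbounded loss $\ell_u^u$ and the separate treatment of non-identifiable channels --- whereas the combinatorial/convexity computation in the final paragraph, though it is the conceptual heart of the matter, is essentially elementary once the $S_y$-encoding is in place.
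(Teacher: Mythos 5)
Your argument is correct, and after stripping away the surface machinery it reaches the conclusion by the same essential route as the paper: reduce to $\cD_{\epsilon,E}$ via \eqref{eq:red}, localize at the uniform distribution, obtain for each coordinate $i$ a lower bound on the asymptotic (normalized) risk governed by a one-dimensional Fisher information in the direction proportional to $e_i-\frac1k\mathbf 1$ (your test vector is a scalar multiple of the paper's $\mathbi v_i$), and finish with convexity of $t\mapsto t^{-u/2}$ and the extremal-channel computation (your bound $\sum_i A_i \le k\,d^\ast(k-d^\ast)/(d^\ast e^\epsilon+k-d^\ast)^2$ is a reformulation of Lemma~\ref{lem:ax}). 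Where you diverge is the asymptotic-normality machinery used to justify the localization. The paper lower-bounds the minimax risk by a Bayes risk with uniform prior on a shrinking Euclidean ball $\cP$, decomposes the loss coordinate-wise, slices $\cP$ into parallel line segments $S_i(\mathbi p^\ast)$, and invokes one-dimensional posterior asymptotic normality (Bernstein--von Mises / LAN for the posterior) on each segment; this deliberately sidesteps the need to invert a $(k-1)\times(k-1)$ Fisher information matrix. You instead invoke the $(k-1)$-dimensional Hájek--Le Cam local asymptotic minimax theorem to pass to the Gaussian shift experiment, and then avoid matrix inversion by the variational formula $\sigma_i^2=\max_{0\neq h\in H} h_i^2/\mathcal I(h)$, into which you substitute the test direction. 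The two devices are essentially equivalent (both amount to a one-dimensional Cram\'er--Rao bound along the chosen direction), but yours leans on a standard frequentist black box and so is shorter to state once LAM is granted, whereas the paper's segment-slicing Bayes argument is more self-contained and spells out explicitly why a truncation-type constant ($D$ vs.\ $D'$, i.e.\ $\Vol(\cP_0)/\Vol(\cP)\to 1$) absorbs the boundary effects. Your two remarks --- that the unbounded loss requires a truncation step, and that a singular Fisher information on $H$ forces $r^{\ell_u^u}_{k,n}(\mathbi Q)$ to stay bounded away from zero via a two-point (non-shrinking) argument --- are exactly the points that would need to be filled in to make the LAM invocation airtight, and both are handled correctly.
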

Note that this lower bound holds for any loss function $\ell_u^u, u\ge 1$.  The proof of this theorem is given in Section~\ref{Sect:LAN}.

\begin{theorem} \label{thm:ac} Consider the privatization 
scheme $\mathbi{Q}=\mathbi{Q}_{k,\epsilon,d^\ast}$ and let $\hat{\mathbi{p}}$ be the empirical estimator given by \eqref{eq:emp}.
For every $k$ and $\epsilon$ and every $0< u\le 2$,
$$
r_{k,n}^{\ell_u^u} (\mathbi{Q}_{k,\epsilon,d^\ast}, \hat{\mathbi{p}}) = 
\frac{k}{n^{u/2}} C_u M(k,\epsilon)^{u/2} + o(n^{-u/2}).
$$
\end{theorem}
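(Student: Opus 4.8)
The plan is to evaluate $r_{k,n}^{\ell_u^u}(\mathbi{Q}_{k,\epsilon,d^\ast},\hat{\mathbi p})$ directly, exploiting that under $\mathbi{Q}_{k,\epsilon,d^\ast}$ the empirical estimator is an affine function of sums of i.i.d.\ Bernoulli variables, so that moment convergence in the central limit theorem pins down the leading term exactly. Fix $d=d^\ast$ and put $\mathbi Q=\mathbi Q_{k,\epsilon,d}$; for $\mathbi p\in\Delta_k$ let $\mathbi m=\mathbi p\mathbi Q$. First I would note that for $Y^n\sim\mathbi m^n$ the coordinate sequences $(y_i^{(1)},\dots,y_i^{(n)})$, $i\in[k]$, are i.i.d.\ $\mathrm{Bernoulli}(m_i)$ with $m_i:=\sum_{y\in\cY_{k,d}:\,y_i=1}\mathbi m(y)$; that $m_i$ is an affine function of $p_i$ by~\eqref{eq:defQ}; and that $\sum_{i=1}^k m_i=\mathbb E[\sum_i y_i^{(1)}]=d$ is independent of $\mathbi p$. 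Moreover, since the conditional probabilities $\mathbi Q(\{y:y_i=1\}\mid X=j)$ are bounded away from $0$ and $1$ uniformly in $i,j$, the values $m_i$ all lie in a fixed compact subinterval $[c_1,c_2]\subset(0,1)$ as $\mathbi p$ ranges over $\Delta_k$. Reading off the coefficients of~\eqref{eq:emp} and using that $\hat{\mathbi p}$ is unbiased (\cite{Ye17}; cf.\ Proposition~\ref{prop:risks}), we have $\hat p_i-p_i=A\,(t_i(Y^n)/n-m_i)$ for a constant $A=A(k,\epsilon,d)$.

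Next I would apply moment convergence in the CLT. By the CLT, $\sqrt n\,(t_i(Y^n)/n-m_i)\Rightarrow\cN(0,m_i(1-m_i))$; and since all centered moments of order $\le 4$ of the binomial $t_i(Y^n)$ are bounded by absolute constants uniformly in $n$ and in $m_i\in[c_1,c_2]$, the sequence $\{|\sqrt n\,(t_i/n-m_i)|^u\}_n$ is uniformly integrable. Together with a Berry--Esseen estimate (uniform because $m_i(1-m_i)$ is bounded below on $[c_1,c_2]$) this yields
$$
n^{u/2}\,\underset{Y^n\sim \mathbi m^n}{\mathbb E}\,|\hat p_i-p_i|^u=A^u\,\mathbb E\,\big|\sqrt n\,(t_i/n-m_i)\big|^u\longrightarrow A^uC_u\,(m_i(1-m_i))^{u/2},
$$
uniformly over $\mathbi p\in\Delta_k$. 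Summing over $i\in[k]$ and using that uniform convergence commutes with suprema,
$$
\lim_{n\to\infty}n^{u/2}\,r_{k,n}^{\ell_u^u}(\mathbi Q_{k,\epsilon,d^\ast},\hat{\mathbi p})=C_uA^u\sup_{\mathbi p\in\Delta_k}\ \sum_{i=1}^k\big(m_i(1-m_i)\big)^{u/2}.
$$

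It remains to evaluate the supremum and to identify the constant. For $0<u\le 2$ the map $g(x)=(x(1-x))^{u/2}$ is concave on $[0,1]$, being the composition of the concave nondecreasing power $t\mapsto t^{u/2}$ with the concave function $x\mapsto x(1-x)$; since $\sum_i m_i=d$ is fixed over $\Delta_k$, Jensen's inequality gives $\sum_i g(m_i)\le k\,g(d/k)$ with equality exactly when all the $m_i$ coincide, i.e.\ at $\mathbi p=\mathbi p_U$ (where indeed $m_i=d/k$ for every $i$). Hence the supremum equals $k\,g(d/k)=k\,(d(k-d)/k^2)^{u/2}$, so
$$
\lim_{n\to\infty}n^{u/2}\,r_{k,n}^{\ell_u^u}(\mathbi Q_{k,\epsilon,d^\ast},\hat{\mathbi p})=k\,C_u\Big(A^2\,\tfrac{d(k-d)}{k^2}\Big)^{u/2}.
$$
Finally, specializing this computation to $u=2$ (where the CLT step is replaced by the exact identity $\underset{Y^n\sim\mathbi m^n}{\mathbb E}\,(\hat p_i-p_i)^2=A^2m_i(1-m_i)/n$) and comparing with Proposition~\ref{prop:risks} and the remark following~\eqref{eq:Mke}, which give $r_{k,n}^{\ell_2^2}(\mathbi Q_{k,\epsilon,d^\ast},\hat{\mathbi p})=\tfrac kn A^2\tfrac{d(k-d)}{k^2}=\tfrac kn M(k,\epsilon)$, we conclude $A^2 d(k-d)/k^2=M(k,\epsilon)$ (this can also be verified directly from the closed form of $A$). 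Substituting, the limit equals $k\,C_u\,M(k,\epsilon)^{u/2}$, which is the assertion.

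The step I expect to require the most care is the \emph{uniformity in $\mathbi p$} of the moment convergence in the second paragraph: one must upgrade the pointwise convergence of $\mathbb E|\sqrt n(t_i/n-m_i)|^u$ to uniformity over the compact range $[c_1,c_2]$ of $m_i$, so that $\sup_{\mathbi p\in\Delta_k}$ may be exchanged with $\lim_{n\to\infty}$. This is exactly where it matters that the structure of $\mathbi Q_{k,\epsilon,d^\ast}$ keeps $m_i$ bounded away from the degenerate endpoints $0$ and $1$: near those endpoints the variance collapses, the Gaussian approximation degrades, and the exchange of limit and supremum would be unjustified. Given the uniform lower bound on $m_i(1-m_i)$, the uniform moment convergence follows from standard quantitative CLT estimates combined with the uniform moment bounds on the binomial noted above.
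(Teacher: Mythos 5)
Your proposal is correct and follows the same overall route as the paper's proof: write $\hat p_i-p_i$ as an affine function of a centered binomial, use moment convergence in the CLT to extract the leading term $C_u\bigl(A^2 m_i(1-m_i)/n\bigr)^{u/2}$, and show by a Jensen-type argument that the worst case is at $\mathbi{p}_U$. Two points differ. First, you package the worst-case step as a single application of Jensen to the concave function $g(x)=(x(1-x))^{u/2}$ under the fixed-sum constraint $\sum_i m_i=d$, whereas the paper combines concavity of $t\mapsto t^{u/2}$ with a Cauchy--Schwarz bound on $\sum_i p_i^2$; after observing $(p_i+B)(A-p_i-B)=A^2 m_i(1-m_i)$, these are the same bound, but yours is a bit tidier and makes the role of the constraint $\sum_i m_i=d$ transparent. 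Second, and more substantively, you explicitly justify the uniformity over $\mathbi{p}\in\Delta_k$ of the CLT moment convergence (uniform integrability from bounded centered moments, Berry--Esseen rate made uniform by $m_i(1-m_i)$ being bounded away from $0$), which is exactly what is needed to exchange $\lim_n$ with $\sup_{\mathbi{p}}$; the paper's proof asserts that the supremum is attained at $\mathbi{p}_U$ ``when $n$ is large'' without spelling out this uniformity. Your final identification $A^2\,d(k-d)/k^2=M(k,\epsilon)$ (read off from the $u=2$ case, or verified directly) is also correct. In short: same strategy, slightly cleaner Jensen step, and a more careful treatment of the uniformity that the paper leaves implicit.
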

The proof of this theorem is given in Section~\ref{Sect:pac}.
Note that, unlike Theorem \ref{Thm:Main}, the claim that we make here allows the values of $u\in(0,1)$. 
The special cases of Theorem \ref{thm:ac} for $u=1$ and $u=2$ were addressed in our previous paper \cite{Ye17}, see in particular
Theorem 10.

The crux of our argument is in the proof of Theorem~\ref{Thm:Main}, where we reduce the estimation problem in the $k$-dimensional space 
to a one-dimensional problem. Generally, it is well known that the local minimax risk can be 
calculated from the inverse of the Fisher information matrix. However, it is difficult to obtain the exact expression of the inverse of 
a large-size matrix, and without it, the path to the desired estimates is not so clear. To work around this complication,
we view a ball in a high-dimensional space as a union of parallel line segments with a certain direction $\mathbi{v}_i$. We first consider the estimation problem on each line segment individually. Since this is a one-dimensional problem, 
its minimax rate can be easily calculated from the Fisher information of the corresponding parameter. For the estimation of each component $p_i$ of the probability distribution, we choose a suitable direction vector  $\mathbi{v}_i$. In this way, we reduce the original $k$-dimensional estimation problem to $k$ one-dimensional 
estimation problems and then rely on the additivity of the loss function for the final result.

\section{Proof of Theorem~\ref{Thm:Main}}\label{Sect:LAN}
\subsection{Bayes estimation loss}
In light of \eqref{eq:red}, to prove Theorem~\ref{Thm:Main}, it suffices to show that for every $u\ge 1$,
\begin{equation} \label{eq:dp}
\lim_{n\to\infty} r_{k,n}^{\ell_u^u} (\mathbi{Q}) n^{u/2} \ge
k C_u M(k,\epsilon)^{u/2}
 \text{~~~for all~} \mathbi{Q}\in\cD_{\epsilon,E}. 
\end{equation}

Since the worst-case estimation loss is always lower bounded by the average estimation loss, the minimax risk $r_{k,n}^{\ell_u^u} (\mathbi{Q})$ can be bounded below by the Bayes estimation loss. 
More specifically, we assume that $\mathbi{p}:=\{p_1,p_2,\dots,p_k\}$ is drawn uniformly from 
\begin{equation}\label{eq:npu}
\cP:=\Big\{ \mathbi{p}\in\Delta_k:\|\mathbi{p}-\mathbi{p}_U\|_2 \le \frac{D}{\sqrt n}  \Big\},
\end{equation}
 where $D\gg 1$ is a constant. Let 
$\mathbi{P}=(P_1,P_2,\dots,P_k)$ denote the random vector that corresponds to $\mathbi{p}$.
For a given privatization scheme $\mathbi{Q}$ and the corresponding estimator $\hat{\mathbi{p}}:=(\hat{p}_1,\hat{p}_2,\dots,\hat{p}_k)$, the $\ell_u^u$ Bayes estimation loss is defined as
\begin{align*}
r_{\Bayes}^{\ell_u^u} (\mathbi{Q}, \hat{\mathbi{p}}) := &
\underset{\mathbi{P}\sim \Unif(\cP)}{\mathbb{E}}
\Big[ \underset{Y^n\sim (\mathbi{P}\mathbi{Q})^n}{\mathbb{E}} \ell_u^u(\hat{\mathbi{p}}(Y^n), \mathbi{P}) \Big] \\
= & \sum_{i=1}^k \left( \underset{\mathbi{P}\sim \Unif(\cP)}{\mathbb{E}}
\Big[ \underset{Y^n\sim (\mathbi{P}\mathbi{Q})^n}{\mathbb{E}}  |\hat{p}_i(Y^n)-P_i|^u \Big] \right),
\end{align*}
and the optimal Bayes estimation loss for $\mathbi{Q}$ is
$$
r_{\Bayes}^{\ell_u^u} (\mathbi{Q}) := 
\inf_{\hat{\mathbi{p}}}  r_{\Bayes}^{\ell_u^u} (\mathbi{Q}, \hat{\mathbi{p}}).
$$
We further define component-wise Bayes estimation loss for $\mathbi{Q}$ and $\hat{\mathbi{p}}$
$$
r_{i,\Bayes}^{\ell_u^u} (\mathbi{Q}, \hat{p}_i) :=
\underset{\mathbi{P}\sim \Unif(\cP)}{\mathbb{E}}
\Big[ \underset{Y^n\sim (\mathbi{P}\mathbi{Q})^n}{\mathbb{E}}  |\hat{p}_i(Y^n)-P_i|^u \Big],
\quad i\in[k],
$$
and the optimal component-wise Bayes estimation loss for $\mathbi{Q}$
$$
r_{i,\Bayes}^{\ell_u^u} (\mathbi{Q}):= \inf_{\hat{p}_i}
r_{i,\Bayes}^{\ell_u^u} (\mathbi{Q}, \hat{p}_i),  \quad i\in[k].
$$
Therefore,
$$
r_{\Bayes}^{\ell_u^u} (\mathbi{Q}, \hat{\mathbi{p}})
= \sum_{i=1}^k  r_{i,\Bayes}^{\ell_u^u} (\mathbi{Q}, \hat{p}_i),
\quad
r_{\Bayes}^{\ell_u^u} (\mathbi{Q}) =  \sum_{i=1}^k
r_{i,\Bayes}^{\ell_u^u} (\mathbi{Q}).
$$
As mentioned above, 
$$
r_{k,n}^{\ell_u^u} (\mathbi{Q}) \ge r_{\Bayes}^{\ell_u^u} (\mathbi{Q}) =  \sum_{i=1}^k
r_{i,\Bayes}^{\ell_u^u} (\mathbi{Q}).
$$
We will prove \eqref{eq:dp} by showing that
\begin{equation}\label{eq:mh}
\sum_{i=1}^k r_{i,\Bayes}^{\ell_u^u} (\mathbi{Q}) \ge
\frac{k}{n^{u/2}} C_u M(k,\epsilon)^{u/2} - o(n^{-u/2})
 \text{~~~for all~} \mathbi{Q}\in\cD_{\epsilon,E}.
\end{equation}

\subsection{Lower bound on one-dimensional Bayes estimation loss}
Below we will prove a lower bound on $r_{i,\Bayes}^{\ell_u^u} (\mathbi{Q})$. To this end, in this section we consider a one-dimensional Bayes estimation problem.
Define the following vectors:
\begin{equation}\label{eq:vv}
\mathbi{v}_i:=\Big(-\frac1{k-1},\dots,-\frac1{k-1},1,-\frac1{k-1},\dots,-\frac1{k-1}\Big), \quad i\in[k].
\end{equation}
where the 1 is in the $i$th position and all the other coordinates are $-\frac1{k-1}$.
 Let $\mathbi{p}^\ast:=(p_1^\ast,p_2^\ast,\dots,p_k^\ast)\in\Delta_k$ be a probability distribution and let
$S_i(\mathbi{p}^\ast)$ be a line segment with midpoint $\mathbi{p}^\ast$ and direction vector $\mathbi{v}_i$:
\begin{equation}\label{eq:rgh}
 S_i(\mathbi{p}^\ast):= \Big\{ \mathbi{p}^\ast+s \mathbi{v}_i:|s|\le\frac{D'}{\sqrt{n}} \Big\}, \quad i\in[k],
\end{equation}
where $D'\gg 1$ is a constant. Let $\mathbi{p}=(p_1,\dots,p_k)$ be a PMF in the segment $S_i(\mathbi{p}^\ast)$. Given the value $p_i,$ we can find all the other components of $\mathbi{p}$ as follows:
\begin{equation}\label{eq:cpi}
p_v= p_v^\ast-  \frac{1}{k-1}(p_i-p_i^\ast) \text{~~~~for all~} v\neq i.
\end{equation}
Assume that $\mathbi{p}=(p_1,p_2,\dots,p_k)$ is drawn uniformly from $S_i(\mathbi{p}^\ast)$, and we consider the Bayes estimation of $p_i$ from the privatized samples $Y^n$ obtained from applying $\mathbi{Q}$ to the raw samples. More precisely, for an estimator $\hat{p}_i$, we define its Bayes estimation loss
$$
r_{i,S_i(\mathbi{p}^\ast)}^{\ell_u^u} (\mathbi{Q}, \hat{p}_i) :=
\underset{\mathbi{P}\sim \Unif(S_i(\mathbi{p}^\ast))}{\mathbb{E}}
\Big[ \underset{Y^n\sim (\mathbi{P}\mathbi{Q})^n}{\mathbb{E}}  |\hat{p}_i(Y^n)-P_i|^u \Big],
\quad i\in[k],
$$
then the optimal estimation loss is
$$
r_{i,S_i(\mathbi{p}^\ast)}^{\ell_u^u} (\mathbi{Q}) := \inf_{\hat{p}_i} r_{i,S_i(\mathbi{p}^\ast)}^{\ell_u^u} (\mathbi{Q}, \hat{p}_i), \quad i\in[k].
$$
Our approach to obtain the lower bound on this Bayes estimation loss relies on a classical method in asymptotic statistics, namely, local asymptotic normality (LAN) of the posterior distribution \cite{LeCam12,Ibrag81,Hajek72,LCYang12}. More specifically, let $P_i$ be the random variable corresponding to $p_i$.
According to the well-known results in the LAN literature (see for instance \cite[Chapter 2, Theorem 1.1]{Ibrag81} and \cite[Chapter 6]{LCYang12}), when the constant $D'$ is large enough, the conditional distribution of $P_i$ given $Y^n=y^n$ is approximately a Gaussian distribution with variance $(I(p_i^\ast))^{-1}$ for 
almost all\footnote{More precisely, for any $\epsilon_1,\epsilon_2>0$ there is $N$ such that for any $n>N$ there is a subset $E
\subseteq \cY^n$ such that (1) $\mathbb{P}(E)>1-\epsilon_1,$ and (2) for all $y^n\in E$ the relative difference between the pdf of conditional distribution of $P_i$ given $Y^n=y^n$ and the Gaussian pdf is at most $\epsilon_2$.}
$y^n\in\cY^n$ as $n$ goes to infinity, where $I(\cdot)$ is the Fisher information of the parameter $p_i$.
Before we calculate the value of $I(p_i^\ast)$, let us recall a simple fact about Gaussian distribution:
Suppose that $X$ is a Gaussian random variable, then one can easily verify\footnote{
Let $\phi(x)$ be the pdf of $X$ and note that $\phi(x)=\phi(2\mathbb{E}X-x)$ for all real $x.$ By convexity of $|\cdot|^u, u\ge 1$ we have
  $|x-\mathbb{E}X|^u\le (1/2)(|a-x|^u+|2\mathbb{E}X-x-a|^u) 
$ for all $a$. Integrating against $\phi(x)$ and using the symmetry condition, we obtain that
$\mathbb{E}|X-\mathbb{E}X|^u\le \mathbb{E}|X-a|^u$ for all $u\ge 1, a\in\mathbb{R}.$
} that for any $u\ge 1,$
\begin{equation}\label{eq:ji}
\mathbb{E}X=\argmin_{a} \mathbb{E}|X-a|^u.
\end{equation}
Therefore, the estimator $\hat{p}_i(y^n)=\mathbb{E}(P_i|Y^n=y^n)$ is asymptotically optimal for this Bayes estimation problem under the $\ell_u^u$ loss function for all $u\ge 1$. Since the variance of $P_i$ given $Y^n=y^n$ is $(I(p_i^\ast))^{-1}$ for almost all $y^n\in\cY^n$, the Bayes estimation loss of this asymptotically optimal estimator is
$$
C_u (I(p_i^\ast))^{-u/2} (1-o(1)).
$$
Thus we conclude that
\begin{equation} \label{eq:rx}
r_{i,S_i(\mathbi{p}^\ast)}^{\ell_u^u} (\mathbi{Q}) \ge C_u (I(p_i^\ast))^{-u/2} (1-o(1)) 
\quad \text{for all~} u\ge 1.
\end{equation}

Now we are left to calculate the value of $I(p_i^\ast)$.
To this end, we introduce some notation.
For a given privatization scheme $\mathbi{Q}\in\cD_{\epsilon,E}$ with output size $L$, we write its output alphabet as $\cY=\{1,2,\dots,L\}$, and we use the shorthand notation 
\begin{equation}\label{eq:lq}
q_{jv}:=\mathbi{Q}(j|v)
\end{equation}
 for all $j\in[L]$ and $v\in[k]$.
For $j\in[L]$ and $y^n=(y^{(1)},y^{(2)},\dots,y^{(n)})\in\cY^n$, define $w_j(y^n):=\sum_{v=1}^n \mathbbm{1}[y^{(v)}=j]$ to be
the number of times that symbol $j$ appears in $y^n$. Let $\mathbb{P}(y^n;p_i)$ be the probability mass function of a random vector $Y^n$ formed of i.i.d. samples drawn according to the distribution $\mathbi{m}=\mathbi{p}\mathbi{Q}$, 
where the other components of $\mathbi{p}$ are calculated from $p_i$ according to \eqref{eq:cpi}. The random variables
$w_j(Y^n)$ follow the multinomial distribution, and $\mathbb{E}w_j(Y^n)=n\mathbi{m}(j),j\in[L].$ Therefore,
\begin{align*}
\log \mathbb{P}(y^n;p_i) & = \sum_{j=1}^L w_j(y^n) \log \Big(\sum_{v=1}^k p_v q_{jv} \Big) \\
& = \sum_{j=1}^L w_j(y^n) \log\Big(p_i q_{ji} + \sum_{v\neq i} \Big(p_v^\ast-  \frac{1}{k-1}(p_i-p_i^\ast) \Big) q_{jv} \Big),
\end{align*}
and the Fisher information of $p_i$ is
\begin{align*}
I(p_i) & = - \underset{Y^n\sim (\mathbi{p}\mathbi{Q})^n}{\mathbb{E}} \left[\frac{d^2}{d p_i^2} \log \mathbb{P}(y^n;p_i) \right] \\
& =\sum_{j=1}^L \frac{(q_{ji}-\frac{1}{k-1}\sum_{v\neq i} q_{jv})^2}{\Big(p_i q_{ji} + \sum_{v\neq i} \Big(p_v^\ast-  \frac{1}{k-1}(p_i-p_i^\ast) \Big) q_{jv} \Big)^2}
\underset{Y^n\sim (\mathbi{p}\mathbi{Q})^n}{\mathbb{E}} w_j(Y^n) \\
& =\sum_{j=1}^L \frac{(q_{ji}-\frac{1}{k-1}\sum_{v\neq i} q_{jv})^2}
{\Big(\sum_{v=1}^k p_v q_{jv} \Big)^2}
\underset{Y^n\sim (\mathbi{p}\mathbi{Q})^n}{\mathbb{E}} w_j(Y^n) \\
& = n \sum_{j=1}^L \frac{(q_{ji}-\frac{1}{k-1}\sum_{v\neq i} q_{jv})^2}
{\sum_{v=1}^k p_v q_{jv} } \\
& = \frac{nk^2}{(k-1)^2} \sum_{j=1}^L \frac{(q_{ji}-\frac{1}{k}\sum_{v=1}^k q_{jv})^2}
{\sum_{v=1}^k p_v q_{jv} },
\end{align*}
where $p_v$'s on the last line are given by \eqref{eq:cpi}.
In particular,
$$
I(p_i^\ast) = \frac{nk^2}{(k-1)^2} \sum_{j=1}^L \frac{(q_{ji}-\frac{1}{k}\sum_{v=1}^k q_{jv})^2}
{\sum_{v=1}^k p_v^\ast q_{jv} }.
$$
Combining this with \eqref{eq:rx}, we have
$$
r_{i,S_i(\mathbi{p}^\ast)}^{\ell_u^u} (\mathbi{Q}) \ge 
C_u \Big( \frac{nk^2}{(k-1)^2} \sum_{j=1}^L \frac{(q_{ji}-\frac{1}{k}\sum_{v=1}^k q_{jv})^2}
{\sum_{v=1}^k p_v^\ast q_{jv} } \Big)^{-u/2} -o(n^{-u/2})
\quad \text{for all~} u\ge 1.
$$
For $j\in[L]$, define
\begin{equation}\label{eq:he}
q_j:=\frac{1}{k}\sum_{v=1}^k q_{jv}.
\end{equation}
It is clear that when $\mathbi{p}^\ast$ is in the neighborhood of the uniform distribution $\mathbi{p}_U$, i.e.,
when $p_v^\ast=1/k+o_n(1)$ for all $v\in[k]$, we have
\begin{equation}\label{eq:ks}
r_{i,S_i(\mathbi{p}^\ast)}^{\ell_u^u} (\mathbi{Q}) \ge 
C_u \Big( \frac{nk^2}{(k-1)^2} \sum_{j=1}^L \frac{(q_{ji}- q_j )^2}
{q_j } \Big)^{-u/2} -o(n^{-u/2})
\quad \text{for all~} u\ge 1.
\end{equation}

\subsection{Proof of \eqref{eq:mh}}
Our first step in this section will be to prove a lower bound on $r_{i,\Bayes}^{\ell_u^u} (\mathbi{Q})$.
Let us phrase the claim in \eqref{eq:ks} in a more detailed form: 
For any $\delta>0$, there exists $D_0>0$ such that whenever the constant $D'$ in the definition of $S_i(\mathbi{p}^\ast)$ is larger than $D_0$,
\begin{equation}\label{eq:ks1}
r_{i,S_i(\mathbi{p}^\ast)}^{\ell_u^u} (\mathbi{Q}) \ge 
C_u \Big( \frac{nk^2}{(k-1)^2} \sum_{j=1}^L \frac{(q_{ji}- q_j )^2}
{q_j } \Big)^{-u/2} -\delta n^{-u/2}
\quad \text{for all~} u\ge 1.
\end{equation}
The constant $D'$ is required to be large for the local asymptotic normality arguments to hold (refer again to \cite[Chapter 2, Theorem 1.1]{Ibrag81} and \cite[Chapter 6]{LCYang12}).

\begin{proposition} Let $\cP$ be the Euclidean ball around $\mathbi{p}_U$ defined in \eqref{eq:npu}.
For a sufficiently large constant $D$ and any $u\ge 1$ we have 
\begin{equation}\label{eq:elm}
r_{i,\Bayes}^{\ell_u^u} (\mathbi{Q}) \ge 
C_u \Big( \frac{nk^2}{(k-1)^2} \sum_{j=1}^L \frac{(q_{ji}- q_j )^2}
{q_j } \Big)^{-u/2} -o(n^{-u/2}).
\end{equation}
\end{proposition}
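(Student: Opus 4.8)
The plan is to deduce the bound on the ball-Bayes risk $r_{i,\Bayes}^{\ell_u^u}(\mathbi{Q})$ from the segment-Bayes bound \eqref{eq:ks1} by viewing the Euclidean ball $\cP$ as a disjoint union of parallel segments $S_i(\mathbi{p}^\ast)$ in direction $\mathbi{v}_i$. Concretely, I would foliate $\cP$ by the fibers of the orthogonal projection onto the hyperplane $\mathbi{v}_i^\perp$ (intersected with the affine hull of $\Delta_k$); almost every fiber is a chord of the ball, hence a segment of the form $\{\mathbi{p}^\ast + s\mathbi{v}_i : |s|\le \ell(\mathbi{p}^\ast)\}$ for some half-length $\ell(\mathbi{p}^\ast)$ depending on how far the base point sits from the center. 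Since $D\gg 1$ in \eqref{eq:npu}, for the overwhelming majority of base points (all but a fraction $o(1)$, by a direct volume/concentration estimate on the ball) the half-length $\ell(\mathbi{p}^\ast)$ exceeds the threshold $D_0$ required by the LAN argument, and moreover every $\mathbi{p}^\ast$ on such a fiber satisfies $p_v^\ast = 1/k + o_n(1)$ because the whole ball $\cP$ shrinks to $\mathbi{p}_U$ at rate $1/\sqrt n$. Writing $\mu$ for the uniform law on $\cP$ and using that conditioning on the fiber only decreases the infimum of the posterior risk (the Bayes estimator for $\cP$ restricted to a fiber is a valid estimator for that fiber's sub-problem), we get
$$
r_{i,\Bayes}^{\ell_u^u}(\mathbi{Q}) \ge \underset{\mathbi{p}^\ast}{\mathbb{E}}\Big[ r_{i,S_i(\mathbi{p}^\ast)}^{\ell_u^u}(\mathbi{Q}) \Big],
$$
where the expectation is over a randomly chosen fiber (with the induced measure) and the segment half-length on the right is $\ell(\mathbi{p}^\ast)$ rather than a fixed $D'$.

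Next I would handle the mismatch between the variable half-length $\ell(\mathbi{p}^\ast)$ and the fixed $D'$ in \eqref{eq:ks1}. The clean way is to note that \eqref{eq:ks1} is really a statement about segments of half-length \emph{at least} $D_0/\sqrt n$: enlarging the segment cannot decrease the minimax risk by more than a lower-order amount (one can also simply truncate each long fiber to a centered sub-segment of half-length exactly $D'$, since restricting the prior to a sub-interval and using the fiber's Bayes estimator on it again only helps). Either way, for the $1-o(1)$ fraction of fibers with $\ell(\mathbi{p}^\ast)\ge D_0/\sqrt n$ we may invoke \eqref{eq:ks1} with $\delta$ arbitrary, obtaining on each such fiber the bound $C_u\big(\frac{nk^2}{(k-1)^2}\sum_{j=1}^L \frac{(q_{ji}-q_j)^2}{q_j}\big)^{-u/2} - \delta n^{-u/2}$, where I have used $p_v^\ast = 1/k + o_n(1)$ to replace $\sum_v p_v^\ast q_{jv}$ by $q_j$ as in the passage from \eqref{eq:rx}'s consequence to \eqref{eq:ks}. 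The remaining $o(1)$ fraction of fibers contributes a nonnegative quantity, so it can only be dropped (the per-fiber risk is $\ge 0$). Taking the expectation over fibers, the main term is reproduced up to the factor $1-o(1)$ from the good-fiber fraction, and since that main term is $\Theta(n^{-u/2})$ the defect is $o(n^{-u/2})$; absorbing $\delta n^{-u/2}$ (with $\delta\to 0$ after $n\to\infty$, or by a standard diagonal argument) into the $o(n^{-u/2})$ term yields \eqref{eq:elm}.

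The step I expect to be the main obstacle is making the foliation argument fully rigorous: one must check that the uniform measure on the $k$-ball $\cP$ disintegrates along direction $\mathbi{v}_i$ into (a) a base measure on the transverse slice and (b) the uniform law on each chord, so that ``$\mathbi{P}\sim\Unif(\cP)$ conditioned on its fiber'' is exactly ``$\mathbi{P}\sim\Unif(S_i(\mathbi{p}^\ast))$'' — this is where the specific geometry of $\mathbi{v}_i$ (it lies in the tangent plane of the simplex, so the chords stay inside $\Delta_k$) and the Fubini/co-area computation enter, and it is also where one must verify the quantitative claim that the fraction of chords shorter than $D_0/\sqrt n$ is $o(1)$ when $D/D_0$ is large. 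A secondary technical point is uniformity of the $o(n^{-u/2})$ terms: the LAN approximation in \eqref{eq:ks1} is stated for fixed $\mathbi{p}^\ast$, and to integrate it one wants the error to be controlled uniformly over the good fibers; this should follow from the fact that all relevant $\mathbi{p}^\ast$ lie in an $o_n(1)$-neighborhood of $\mathbi{p}_U$, where the Fisher information $I(p_i^\ast)$ and the regularity constants vary continuously, but it deserves an explicit remark.
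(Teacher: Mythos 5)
Your proposal is correct and follows essentially the same route as the paper: foliate $\cP$ into parallel chords in direction $\mathbi{v}_i$, lower-bound the ball Bayes risk by the fiber-averaged segment Bayes risks, apply \eqref{eq:ks1} on the $1-o(1)$ fraction of chords whose (rescaled) half-length exceeds $D_0$, and absorb the short-chord fraction and the $\delta n^{-u/2}$ defect into $o(n^{-u/2})$. The technical points you flag (disintegration of $\Unif(\cP)$ along chords, and uniformity of the LAN error over midpoints in an $O(1/\sqrt n)$ neighborhood of $\mathbi{p}_U$) are the same ones the paper treats at sketch level, so there is no gap beyond what the paper itself leaves implicit.
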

\begin{proof}
We can view $\cP$ as a union of (uncountably many) parallel line segments with direction vector $\mathbi{v}_i$ defined in \eqref{eq:vv}. 
Each of these line segments can be written as $S_i(\mathbi{p}^\ast)$ (see \eqref{eq:rgh}), with a suitably chosen
midpoint $\mathbi{p}^\ast\in \cP.$ Since the midpoints of all the line segments lie inside $\cP$, which is a neighborhood of the uniform distribution, by \eqref{eq:ks1} we have that
for any estimator $\hat{p}_i$, the average $\ell_u^u$ estimation loss 
$r_{i,S_i(\mathbi{p}^\ast)}^{\ell_u^u} (\mathbi{Q}, \hat{p}_i)$ on any of these line segments $S_i(\mathbi{p}^\ast)$ with $D'\ge D_0$ is lower bounded by 
$$
r_{i,S_i(\mathbi{p}^\ast)}^{\ell_u^u} (\mathbi{Q}, \hat{p}_i) \ge 
r_{i,S_i(\mathbi{p}^\ast)}^{\ell_u^u} (\mathbi{Q}) \ge
C_u \Big( \frac{nk^2}{(k-1)^2} \sum_{j=1}^L \frac{(q_{ji}- q_j )^2}
{q_j } \Big)^{-u/2} - \delta n^{-u/2}
$$
for $u\ge 1$. To compute the average estimation loss $r_{i,\Bayes}^{\ell_u^u} (\mathbi{Q}, \hat{p}_i)$ on $\cP$
we need to average over all the segments with weight proportional to the length of the segment. Given $D_0,$ we can choose $D$ in 
\eqref{eq:npu} large enough so that the proportion of the segments $S_i(\mathbi{p}^\ast)$ with $D'\ge D_0$ out of all the segments
in $\cP$ is arbitrarily close to one (formally, denote the union of such segments as $\cP_0$, then $\Vol(\cP_0)/\Vol(\cP)$ can be made arbitrarily close to $1$ as long as we set $D/D_0$ to be large enough). The average estimation loss along each of these segments is uniformly bounded below as in \eqref{eq:ks1}, and thus the average loss on $\cP_0$ is lower bounded by the same quantity. Combining the fact that $\Vol(\cP_0)/\Vol(\cP)=1-o(1)$, we have
$$
r_{i,\Bayes}^{\ell_u^u} (\mathbi{Q}, \hat{p}_i) \ge 
C_u \Big( \frac{nk^2}{(k-1)^2} \sum_{j=1}^L \frac{(q_{ji}- q_j )^2}
{q_j } \Big)^{-u/2} -o(n^{-u/2})
\quad \text{for all~} u\ge 1.
$$
This lower bound holds for any estimator $\hat{p}_i$, and this implies the claimed lower bound \eqref{eq:elm}.
\end{proof}

We will need the following lemma.\begin{lemma} \label{lem:ax}
For every $\mathbi{Q}\in\cD_{\epsilon,E}$ with output alphabet $\cY=\{1,2,\dots,L\}$ we have
$$
\sum_{i=1}^k \frac{q_{ji}^2}{q_j^2} \le k \Big( 1 +   (e^{\epsilon} - 1)^2  
 \frac{d^\ast (k-d^\ast)} {(d^\ast e^{\epsilon} + k - d^\ast)^2}  \Big) \quad\quad
\text{for all~} j\in[L].
$$
\end{lemma}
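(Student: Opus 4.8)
The plan is to analyze, for each fixed $j$, the column $(q_{j1},\dots,q_{jk})$ of $\mathbi{Q}$ separately and to exploit the two-level structure forced by membership in $\cD_{\epsilon,E}$. First I would fix $j\in[L]$, set $m_j:=\min_{v\in[k]}q_{jv}$, and let $a_j:=|\{v\in[k]:q_{jv}=e^\epsilon m_j\}|$ count the ``high'' entries of the column. By the definition of $\cD_{\epsilon,E}$ in \eqref{eq:DES}, every entry $q_{jv}$ equals either $m_j$ or $e^\epsilon m_j$, so the column is completely described by the pair $(m_j,a_j)$ with $0\le a_j\le k$.

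Next I would record the two relevant quantities in closed form:
$$
q_j=\frac1k\sum_{v=1}^k q_{jv}=\frac{m_j}{k}\bigl(a_j(e^\epsilon-1)+k\bigr),\qquad
\sum_{i=1}^k q_{ji}^2=m_j^2\bigl(a_j(e^{2\epsilon}-1)+k\bigr).
$$
Dividing, the factor $m_j^2$ cancels, so the left-hand side of the lemma depends only on $a_j$ (and on $k,\epsilon$):
$$
\sum_{i=1}^k\frac{q_{ji}^2}{q_j^2}=\frac{k^2\bigl(a_j(e^{2\epsilon}-1)+k\bigr)}{\bigl(a_j(e^\epsilon-1)+k\bigr)^2}.
$$

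I would then verify the algebraic identity, valid for every real $a$ (write $a(e^{2\epsilon}-1)=a(e^\epsilon-1)(e^\epsilon+1)$, note $a(e^\epsilon-1)+k=ae^\epsilon+k-a$, and expand),
$$
\frac{k\bigl(a(e^{2\epsilon}-1)+k\bigr)}{\bigl(a(e^\epsilon-1)+k\bigr)^2}
=1+(e^\epsilon-1)^2\,\frac{a(k-a)}{(ae^\epsilon+k-a)^2}.
$$
With this, the claimed bound reduces to showing that $a\mapsto a(k-a)/(ae^\epsilon+k-a)^2$, over integers $0\le a\le k$, is maximized at $a=d^\ast$. For $a\in\{0,k\}$ this expression is $0$; for $1\le a\le k-1$ it is the reciprocal of $(ae^\epsilon+k-a)^2/\bigl(a(k-a)\bigr)$, which by the definition of $d^\ast$ in \eqref{eq:dstar} is minimized over $\{1,\dots,k-1\}$ exactly at $a=d^\ast$. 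Hence the reciprocal is maximized at $a=d^\ast$, and since its value there is strictly positive it also dominates the endpoint value $0$. Substituting $a=d^\ast$ into the identity yields precisely the right-hand side of the lemma.

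The argument is essentially bookkeeping, so there is no serious obstacle; the two points requiring a little care are (i) checking that the parametrization $(m_j,a_j)$ really captures every column of a scheme in $\cD_{\epsilon,E}$ and that $m_j$ cancels, and (ii) remembering that $a_j$ may equal $0$ or $k$, values outside the range of $d^\ast$, so these extreme columns must be dispatched separately — which is immediate, since they contribute $0$ to the correction term.
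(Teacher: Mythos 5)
Your proposal is correct and follows essentially the same route as the paper: fix a column, use the two‑level structure from $\cD_{\epsilon,E}$ to parametrize the column by $(m_j,a_j)$, cancel $m_j$, apply the algebraic identity turning the ratio into $k\bigl(1+(e^\epsilon-1)^2\,a(k-a)/(ae^\epsilon+k-a)^2\bigr)$, and then invoke the definition of $d^\ast$. The one (minor but genuine) refinement over the paper's write‑up is that you explicitly dispatch the boundary cases $a_j\in\{0,k\}$, which fall outside the optimization range $1\le d\le k-1$ of $d^\ast$; the paper leaves that implicit (the correction term is $0$ there), so your version is a touch more careful while being substantively identical.
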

\begin{proof}
Let $m_j:=\min_{i\in[k]}q_{ji}.$
According to the definition of $\cD_{\epsilon,E}$ in \eqref{eq:DES}, 
the coordinates of the vector $(q_{ji},i\in[k])$ are either $m_je^\epsilon$ or $m_j.$
Let $d$ be the number of $m_je^\epsilon$ entries, then
\begin{align*}
q_j= \frac{m_j}{k}(d e^\epsilon + k-d), \\
\sum_{i=1}^k q_{ji}^2 = m_j^2(d e^{2\epsilon} + k-d) .
\end{align*}
We obtain
\begin{align*}
\sum_{i=1}^k \frac{q_{ji}^2}{q_j^2} & = 
\frac{k^2 (d e^{2\epsilon} + k-d)}{(d e^\epsilon + k-d)^2}
=k \frac{(d e^{2\epsilon} + k-d)(d+k-d)}{(d e^\epsilon + k-d)^2} \\
& = k \frac{d^2 e^{2\epsilon} + (k-d)^2 + d(k-d)(e^{2\epsilon}+1)}{(d e^\epsilon + k-d)^2} \\
& = k \frac{d^2 e^{2\epsilon} +2d(k-d)e^\epsilon + (k-d)^2 + d(k-d)(e^{2\epsilon} - 2e^\epsilon +1)}
{(d e^\epsilon + k-d)^2} \\
& = k \frac{(d e^\epsilon + k-d)^2 + d(k-d)(e^\epsilon -1)^2}
{(d e^\epsilon + k-d)^2}  \\
& = k\Big(1+(e^\epsilon -1)^2 \frac{d(k-d)}
{(d e^\epsilon + k-d)^2}\Big) \\
& \le k \Big( 1 +   (e^{\epsilon} - 1)^2  
 \frac{d^\ast (k-d^\ast)} {(d^\ast e^{\epsilon} + k - d^\ast)^2}  \Big),
\end{align*}
where the last inequality follows from the definition of $d^\ast$ in \eqref{eq:dstar}.
\end{proof}

Now we are ready to prove \eqref{eq:mh}. Using the obvious relations
$\sum_{j=1}^L q_{ji}=\sum_{j=1}^L q_j=1$,  we can simplify the right-hand side of \eqref{eq:elm}  as follows:
  \begin{align*}
    \sum_{j=1}^L \Big(\frac{(q_{ji}- q_j )^2}
{q_j } \Big)&=  \sum_{j=1}^L\Big(\sum_{j=1}^L \frac{q_{ji}^2}{q_j} - 2\sum_{j=1}^L q_{ji} + \sum_{j=1}^L q_j \Big) \Big)\\
  &=\sum_{j=1}^L \frac{q_{ji}^2}{q_j} - 1.
\end{align*}
Now let us sum \eqref{eq:elm} over $i\in[k]$ on both sides and use the simplification above:
\begin{equation} \label{eq:M}
\sum_{i=1}^k r_{i,\Bayes}^{\ell_u^u} (\mathbi{Q}) \ge 
C_u \sum_{i=1}^k \Big(\frac{nk^2}{(k-1)^2}\Big(\sum_{j=1}^L \frac{q_{ji}^2}{q_j} - 1 \Big) \Big)^{-u/2} -o(n^{-u/2}).
\end{equation}
Since for $u>0,$ $x^{-u/2}$ is a convex function for $x>0,$ we can further bound below the right-hand side of \eqref{eq:M}:   
\begin{align*}
   \sum_{i=1}^k &\Big(\frac{nk^2}{(k-1)^2}\Big(\sum_{j=1}^L \frac{q_{ji}^2}{q_j} - 1 \Big) \Big)^{-u/2}
  \ge k\Big( \frac{1}{k} \sum_{i=1}^k \frac{nk^2}{(k-1)^2} 
\Big(\sum_{j=1}^L \frac{q_{ji}^2}{q_j} - 1 \Big) \Big)^{-u/2}  \\
  &=k  \Big( \frac{nk}{(k-1)^2} \sum_{j=1}^L \sum_{i=1}^k \frac{q_{ji}^2}{q_j} - \frac{nk^2}{(k-1)^2} \Big)^{-u/2}  \\
  &= k \Big( \frac{nk}{(k-1)^2} \sum_{j=1}^L \Big( q_j\sum_{i=1}^k \frac{q_{ji}^2}{q_j^2} \Big) - \frac{nk^2}{(k-1)^2} \Big)^{-u/2}  \\
  &\ge k   
\Big( \frac{nk^2}{(k-1)^2} \Big( 1 +   (e^{\epsilon} - 1)^2  
 \frac{d^\ast (k-d^\ast)} {(d^\ast e^{\epsilon} + k - d^\ast)^2}  \Big) \sum_{j=1}^L  q_j - \frac{nk^2}{(k-1)^2} \Big)^{-u/2}  \\
  &=k \Big( \frac{nk^2 (e^{\epsilon} - 1)^2}{(k-1)^2}     
 \frac{d^\ast (k-d^\ast)} {(d^\ast e^{\epsilon} + k - d^\ast)^2}  \Big)^{-u/2} \\
 &=\frac{k}{n^{u/2}}M(k,\epsilon)^{u/2}
\quad\quad\quad \text{~~~for all~} \mathbi{Q}\in\cD_{\epsilon,E} 
   \end{align*}
 where the second inequality follows by Lemma \ref{lem:ax} (note the inverted inequality of the Lemma
 because of the negative power $-u/2$). Combining this with \eqref{eq:M}, we conclude that
   \begin{align*}
   \sum_{i=1}^k   r_{i,\Bayes}^{\ell_u^u} (\mathbi{Q}) \ge \frac{k}{n^{u/2}} C_u M(k,\epsilon)^{u/2} - o(n^{-u/2})  \quad \text{~~~for all~} \mathbi{Q}\in\cD_{\epsilon,E}.
   \end{align*}
 Thus we have established \eqref{eq:mh}, and this completes the proof of Theorem~\ref{Thm:Main}.  

\section{Proof of Theorem~\ref{thm:ac}}\label{Sect:pac}
We begin with showing that for the privatization scheme $\mathbi{Q}_{k,\epsilon,d}$ defined in \eqref{eq:defQ} and the estimator  \eqref{eq:emp}, the $\ell_u^u$ estimation loss is maximized for the uniform distribution $\mathbi{p}_U$ for all $0<u\le 2$ when $n$ is large. To shorten the notation, rewrite \eqref{eq:emp} as
$$
\hat{p_i}(y^n)= A \frac{t_i(y^n)}{n} - B, \quad i\in[k],
$$
where
$$
A:= \frac{(k-1)e^{\epsilon}+\frac{(k-1)(k-d)}{d}}{(k-d)(e^{\epsilon}-1)}, \quad \quad
B:= \frac{(d-1)e^{\epsilon}+k-d}{(k-d)(e^{\epsilon}-1)}.
$$
In \cite{Ye17} we have shown that the estimator $\hat{p_i}(y^n)$ is unbiased, i.e.,
$$
p_i = A \underset{Y^n\sim (\mathbi{p}\mathbi{Q}_{k,\epsilon,d})^n}{\mathbb{E}}
\Big(\frac{t_i(Y^n)}{n}\Big) - B, \quad  i\in[k].
$$
By definition,
$$
t_i(Y^n)=\sum_{j=1}^n \mathbbm{1}[Y_i^{(j)}=1]
$$
is the sum of $n$ i.i.d. Bernoulli random variables with parameter
$$
\mathbb{P}[Y_i^{(j)}=1]=\mathbb{E} \frac{t_i(Y^n)}{n} = \frac{p_i}{A} + \frac{B}{A}.
$$
Therefore the variance of $\frac{t_i(Y^n)}{n}$ is $\frac{1}{n}(\frac{p_i}{A} + \frac{B}{A})
(1-\frac{p_i}{A} - \frac{B}{A})$, and the variance of $\hat{p_i}(Y^n)$ is
  \begin{gather*}
\text{Var}\; \hat{p_i}(Y^n)=A^2 \frac{1}{n}\Big(\frac{p_i}{A} + \frac{B}{A}\Big)
\Big(1-\frac{p_i}{A} - \frac{B}{A}\Big)
= \frac{1}{n}(p_i+B)(A-p_i-B).
  \end{gather*}
  Using the Central Limit Theorem, we then obtain for the absolute moment of $\hat{p_i}(Y^n)$ around $p_i$ the following
  approximation:
$$
\underset{Y^n\sim (\mathbi{p}\mathbi{Q}_{k,\epsilon,d})^n}{\mathbb{E}}
|\hat{p_i}(Y^n)-p_i|^u
= C_u \Big( \frac{1}{n}(p_i+B)(A-p_i-B) \Big)^{u/2} + o(n^{-u/2}),
$$
where $C_u$ is the absolute moment of the $\cN(0,1)$ RV; see Section~\ref{Sect:ovr}.
Therefore,
\begin{align*}
\underset{Y^n\sim (\mathbi{p}\mathbi{Q}_{k,\epsilon,d})^n}{\mathbb{E}} &
\ell_u^u(\hat{\mathbi{p}}(Y^n),\mathbi{p})
 = \sum_{i=1}^k C_u \Big( \frac{1}{n}(p_i+B)(A-p_i-B) \Big)^{u/2} + o(n^{-u/2}) \\
& \le k C_u n^{-u/2} \Big(\frac{1}{k} \sum_{i=1}^k (p_i+B)(A-p_i-B) \Big)^{u/2} + o(n^{-u/2}) \\
& = k C_u n^{-u/2} \Big( \frac{A}{k}-\frac{2B}{k}+AB-B^2-\frac{1}{k}\sum_{i=1}^k p_i^2 \Big)^{u/2} + o(n^{-u/2}) \\
& \le  k C_u n^{-u/2} \Big( \frac{A}{k}-\frac{2B}{k}+AB-B^2-\frac{1}{k^2} \Big)^{u/2} + o(n^{-u/2}),
\end{align*}
where the first inequality follows from the fact that $x^{u/2}$ is a concave function of $x$ on $(0,+\infty)$ for all positive $0<u\le 2$, and the last line uses the Cauchy--Schwarz inequality. Both inequalities hold with equality if and only if $\mathbi{p}$ is the uniform distribution. Thus when $n$ is large, for all $0<u\le 2$ and all $1\le d\le k-1$, we have
$$
r_{k,n}^{\ell_u^u} (\mathbi{Q}_{k,\epsilon,d}, \hat{\mathbi{p}})  =
\underset{Y^n\sim (\mathbi{p}_U\mathbi{Q}_{k,\epsilon,d})^n}{\mathbb{E}} \ell_u^u(\hat{\mathbi{p}}(Y^n),\mathbi{p}_U).
$$
In particular, it also holds for $d=d^\ast$.
Next we calculate the estimation loss at the uniform distribution. By symmetry, it is clear that
\begin{align*}
\underset{Y^n\sim (\mathbi{p}_U\mathbi{Q}_{k,\epsilon,d^\ast})^n}{\mathbb{E}}
\Big|\hat{p_i}(Y^n)-\frac{1}{k} \Big|^2
& = \frac{1}{k} \Big( \underset{Y^n\sim (\mathbi{p}_U\mathbi{Q}_{k,\epsilon,d^\ast})^n}{\mathbb{E}} \ell_2^2(\hat{\mathbi{p}}(Y^n),\mathbi{p}_U)  \Big) \\
& = \frac{1}{k}  r_{k,n}^{\ell_2^2} (\mathbi{Q}_{k,\epsilon,d^\ast}, \hat{\mathbi{p}})
= \frac{M(k,\epsilon)}{n}.
\end{align*}
Therefore when the input distribution is uniform, $\hat{p_i}(Y^n)$ can be approximated for large $n$ by a Gaussian random variable with mean $1/k$ and variance $\frac{M(k,\epsilon)}{n}.$ 
Thus,
$$
\underset{Y^n\sim (\mathbi{p}_U\mathbi{Q}_{k,\epsilon,d^\ast})^n}{\mathbb{E}}
\Big|\hat{p_i}(Y^n)-\frac{1}{k} \Big|^u
= C_u \Big( \frac{M(k,\epsilon)}{n} \Big)^{u/2} + o(n^{-u/2}),
$$
so for $0<u\le 2$,
\begin{align*}
r_{k,n}^{\ell_u^u} (\mathbi{Q}_{k,\epsilon,d^\ast}, \hat{\mathbi{p}}) & = 
\underset{Y^n\sim (\mathbi{p}_U\mathbi{Q}_{k,\epsilon,d^\ast})^n}{\mathbb{E}} \ell_u^u(\hat{\mathbi{p}}(Y^n),\mathbi{p}_U) \\
& = \frac{k}{n^{u/2}} C_u M(k,\epsilon)^{u/2} + o(n^{-u/2}).
\end{align*}
This completes the proof of Theorem~\ref{thm:ac}.

\bibliographystyle{IEEEtran}
\bibliography{differential}

\end{document}